\documentclass[11pt,reqno,a4paper]{amsart}
\usepackage{amsmath,amsthm,amssymb,amsfonts,xypic,graphics,color}  
\usepackage[english]{babel}
\usepackage[mathcal]{eucal}

%bob
\usepackage{pdfsync}
\allowdisplaybreaks
\usepackage[all]{xy}

\newcommand{\aaa}{\ensuremath{\mathcal{A}}}

\newcommand{\rrr}{\ensuremath{\mathcal{R}}}

\newcommand{\ttt}{\ensuremath{\mathcal{T}}}

%bob

\DeclareMathOperator*{\myinf}{in\vphantom{p}f}

\begin{document}

\title[A quantitative Lindeberg CLT for random vectors]{Stein's method and a quantitative Lindeberg CLT for the Fourier transforms of random vectors}
\author{B. Berckmoes \and R. Lowen \and J. Van Casteren}
\date{}

\subjclass[2000]{60F05}
\keywords{CLT, Fourier transform, Lindeberg's condition, Stein's method, approach theory, limit operator}
\thanks{(1) Ben Berckmoes is PhD fellow at the Fund for Scientific Research of Flanders (FWO)}
\thanks{(2) Corresponding author: R. Lowen, email address: bob.lowen@uantwerpen.be}

\maketitle

\newtheorem{pro}{Proposition}[section]
\newtheorem{lem}[pro]{Lemma}
\newtheorem{thm}[pro]{Theorem}
\newtheorem{de}[pro]{Definition}
\newtheorem{co}[pro]{Comment}
\newtheorem{no}[pro]{Notation}
\newtheorem{vb}[pro]{Example}
\newtheorem{vbn}[pro]{Examples}
\newtheorem{gev}[pro]{Corollary}
\newtheorem{vrg}[pro]{Question}

\newtheorem{proA}{Proposition}
\newtheorem{lemA}[proA]{Lemma}
\newtheorem{thmA}[proA]{Theorem}
\newtheorem{deA}[proA]{Definition}
\newtheorem{coA}[proA]{Comment}
\newtheorem{noA}[proA]{Notation}
\newtheorem{vbA}[proA]{Example}
\newtheorem{vbnA}[proA]{Examples}
\newtheorem{gevA}[proA]{Corollary}
\newtheorem{vrgA}[proA]{Question}

\newtheorem{proB}{Proposition}
\newtheorem{lemB}[proB]{Lemma}
\newtheorem{thmB}[proB]{Theorem}
\newtheorem{deB}[proB]{Definition}
\newtheorem{coB}[proB]{Comment}
\newtheorem{noB}[proB]{Notation}
\newtheorem{vbB}[proB]{Example}
\newtheorem{vbnB}[proB]{Examples}
\newtheorem{gevB}[proB]{Corollary}
\newtheorem{vrgB}[proB]{Question}

\newcommand{\Lin}{\textrm{\upshape{Lin}}\left(\left\{\xi_{n,k}\right\}\right)}
\newcommand{\Eta}{\textrm{\upshape{H}}}
\newcommand{\Zeta}{\textrm{\upshape{Z}}}

\hyphenation{frame-work}
\hyphenation{dif-fe-rent}
\hyphenation{a-vai-la-ble}
\hyphenation{me-tric}
\hyphenation{to-po-lo-gi-cal}
\hyphenation{con-ti--nu-ous-ly}
\hyphenation{de-pen-ding}
\hyphenation{ne-gli-gi-ble}
\hyphenation{de-ri-va-tive}

\begin{abstract}
We use a multivariate version of Stein's method to establish a quantitative Lindeberg CLT for the Fourier transforms of random $N$-vectors. We achieve this, conceptually mainly by constructing a natural approach structure on $N$-random vectors overlying the topology of weak convergence, and technically mainly by deducing a specific integral representation for the Hessian matrix of a solution to the Stein equation with test function $e_t(x) = \exp\left(- i \sum_{k=1}^N t_k x_k\right)$, where $t,x \in \mathbb{R}^N$.
\end{abstract}

 \section{Introduction and preliminaries}
 
 Before we start with the actual material of the paper we of course have to explain precisely what we mean when we say a ``quantitative Lindeberg CLT''. A classical CLT is a limit theorem for the weak topology, and as such it is a deterministic result saying that, under given conditions, a certain sequence converges weakly to a certain limit point. Classically, what happens if the required conditions are not met is very simple: namely, we do not know that there will be convergence, and if the given conditions are both necessary and sufficient we know that there definitely will not be convergence, and the matter ends there. However, there are situations wherein the deviation from the given conditions can in a very natural way be  measured numerically, in such a way that the smaller the measure of deviation is the better the conditions are approximated. This still will not help us to say anything more about the (topological) limit process and this is where index analysis in the context of approach theory comes into play (\cite{L97}, \cite{L15}). We replace the weak topology by a canonical ``weak approach structure'' and apply the full machinery of approach theory. Where a topological space allows for a notion of convergence, in exactly the same structural and canonical way, an approach space allows for a notion of limit operator, whereby for any sequence and any point the limit operator gives a numerical value indicating ``how far the point is away from being a limit point'' of the sequence, an ``index of convergence''. Of course, in order to be meaningful all this has to be such that there are natural relations between, in our case, on the one hand both the approach structure replacing the weak topology and the ``measure of deviation from conditions'' and on the other hand both the weak topology and the conditions. The following diagram makes things more precise. Herein $(\mathbb{R}^N, d_E)$ stands for the $N$-dimensional Euclidean space and metric, $(\rrr, \lambda)$ stands for the derived space of $\mathbb{R}^N$-valued random variables defined on some probability space $(\Omega, \aaa, \mathbb{P})$ with some, yet to be defined, approach limit operator $\lambda$. The usual way would be to go straight to the topological space $(\rrr, \ttt_w)$, but instead of doing this, we equip it with an approach structure which is such that (1) the underlying topology is the weak topology $\ttt_w$ and (2) it is obtained in a natural canonical way, as is the weak topology. Then the smaller the value of the limit operator (the index of convergence) is, the more convergence in the weak topology is being ``approximated'', or the better a ``virtual limit point'' approximates being a real limit point in the weak topology.
 $$
\xy
\xymatrix{ 
\text{\small{Isometric (approach) level}} \ar@{--}[r] & (\mathbb{R}^N, d_E) \ar@/_10pt/[rrdd]_{\mathsf{Top}} \ar[dd]_{\text{\small{Underlying topology}}} \ar[rr]^{\mathsf{App}}  &&  (\rrr, \lambda) \ar[dd] \\
&&&\\
\text{\small{Isomorphic (topological) level}} \ar@{--}[r] & (\mathbb{R}^N, \ttt_{d_E}) && (\rrr, \ttt_w)
}
\endxy
$$

The precise definition of the approach structure which we use is given in the second section.

The following simple example illustrates the meaning of limit operator and also serves to make clear to the reader that the notion of a limit operator, the index of convergence and the inequalities proven in the sequel are totally different from concepts and formulas related to the notion of rate or speed of convergence. Suppose we consider $\mathbb{R}$ not with its usual topology but with its usual metric. This in fact is a (metric) approach space and from the general theory it follows that the associated  limit operator takes the form
$$
\lambda(x_n \rightarrow x) = \limsup_{n \rightarrow \infty} |x_n - x|,
$$
which is a well-known expression in approximation theory (see e.g.~\cite{E72}, \cite{AMS82}). For instance if we take as sequence $x_n := (-1)^n\varepsilon$ for an arbitrary strictly positive $\varepsilon$ then the formula yields $\lambda(x_n \rightarrow x) = |x| + \varepsilon$ and thus in this case $0$ is the point which best approximates being a limit of this non-convergent sequence, with index of convergence equal to $\varepsilon$.

\vspace{10pt}

Now let us turn to the actual content of the present paper.
Let $\xi$ be a standard normally distributed random variable and $\{\xi_{n,k}\}$ a {\em 1-dimensional standard triangular array} (1-STA), i.e. a triangular array of real random variables 
\begin{equation*}
\begin{array}{cccc} 
\xi_{1,1} &  &  \\
\xi_{2,1} & \xi_{2,2} & \\
\xi_{3,1} & \xi_{3,2} & \xi_{3,3} \\
 & \vdots &
\end{array}
 \end{equation*}
with the following properties.
\begin{enumerate}
\item $\displaystyle{\forall n : \xi_{n,1}, \ldots, \xi_{n,n} \textrm{ are independent.}}$\\
\item $\displaystyle{\forall n, k : \mathbb{E}\left[\xi_{n,k}\right] = 0.}$\\
\item $\displaystyle{\forall n : \mathbb{E}\left[S_n^2\right] = 1}$ with $\displaystyle{S_n = \sum_{k=1}^n \xi_{n,k}.}$
\end{enumerate}

The Lindeberg CLT (\cite{F71}) provides a useful condition under which the rowwise sums of $\{\xi_{n,k}\}$ are asymptotically normally distributed. As usual, $\stackrel{w}{\rightarrow}$ stands for weak convergence.

\begin{thm}\upshape{(Lindeberg CLT)}\label{LCLT}
{\em Suppose that $\left\{\xi_{n,k}\right\}$ satisfies} Lindeberg's condition {\em in the sense that}
\begin{equation*}
\forall \epsilon > 0 : \sum_{k=1}^n \mathbb{E}\left[\xi_{n,k}^2 ; \left|\xi_{n,k}\right| > \epsilon\right] \rightarrow 0.\label{LindebergCondition}
\end{equation*}
{\em Then}
\begin{equation*}
S_n \stackrel{w}{\rightarrow} \xi.
\end{equation*}		
\end{thm}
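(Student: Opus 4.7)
The strategy is to invoke L\'evy's continuity theorem and thereby reduce the weak convergence $S_n \stackrel{w}{\rightarrow} \xi$ to pointwise convergence of characteristic functions: it suffices to prove
\[
\mathbb{E}\left[e^{-itS_n}\right] \longrightarrow e^{-t^2/2}, \qquad t \in \mathbb{R}.
\]
Since this is precisely the $N=1$ specialization of the quantitative Fourier CLT the paper sets out to establish, the classical Lindeberg theorem will drop out as a qualitative corollary of the main quantitative bound.

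Fixing $t \in \mathbb{R}$, I would apply Stein's method to the test function $e_t(x) = e^{-itx}$. Let $f_t$ denote the bounded $C^2$ solution of the Stein equation $f_t'(x) - x f_t(x) = e_t(x) - e^{-t^2/2}$, so that
\[
\mathbb{E}\left[e^{-itS_n}\right] - e^{-t^2/2} = \mathbb{E}\left[f_t'(S_n) - S_n f_t(S_n)\right].
\]
The key analytic input, to be provided by the integral representation of $f_t''$ announced in the abstract, is a usable sup-norm bound $\|f_t''\|_\infty \leq \Phi(t) < \infty$. With such a bound in hand, the classical leave-one-out decomposition -- writing $S_n = S_n^{(k)} + \xi_{n,k}$ with $S_n^{(k)}$ independent of $\xi_{n,k}$ and exploiting both $\mathbb{E}[\xi_{n,k}] = 0$ and $\sum_k \mathbb{E}[\xi_{n,k}^2] = 1$ -- produces, through a second-order Taylor expansion, an estimate of Stein-Lindeberg type,
\[
\left|\mathbb{E}\left[e^{-itS_n}\right] - e^{-t^2/2}\right| \leq C\,\Phi(t) \sum_{k=1}^n \mathbb{E}\bigl[\xi_{n,k}^2 \min(|\xi_{n,k}|, 1)\bigr].
\]

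The argument is then closed by the standard truncation at an arbitrary level $\epsilon > 0$:
\[
\sum_{k=1}^n \mathbb{E}\bigl[\xi_{n,k}^2 \min(|\xi_{n,k}|,1)\bigr] \leq \epsilon \sum_{k=1}^n \mathbb{E}[\xi_{n,k}^2] + \sum_{k=1}^n \mathbb{E}\bigl[\xi_{n,k}^2 ; |\xi_{n,k}| > \epsilon\bigr] = \epsilon + \sum_{k=1}^n \mathbb{E}\bigl[\xi_{n,k}^2 ; |\xi_{n,k}| > \epsilon\bigr].
\]
Lindeberg's hypothesis drives the second sum to zero as $n \to \infty$; letting $\epsilon \downarrow 0$ afterwards yields convergence of characteristic functions for every $t$, and L\'evy's theorem concludes.

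The probabilistic machinery above is essentially routine; the genuine difficulty lies in the analytic bound on $f_t''$. Since the test function $e_t$ is oscillatory rather than smooth-and-bounded in the usual Stein sense, the textbook estimates for Stein solutions do not apply, and a naive approach would make $\Phi(t)$ grow badly in $t$, ruining both the qualitative limit and any hope of a sharp quantitative bound. Producing a sufficiently controlled representation of $f_t''$ -- the ``specific integral representation'' flagged in the abstract -- is the technical heart of the paper and the main obstacle one must overcome.
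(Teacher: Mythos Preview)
The paper does not give a standalone proof of Theorem~1.1; it is quoted from Feller as a classical result. What the paper \emph{does} prove is the $N$-dimensional quantitative bound (Theorem~2.5 and Corollary~2.6), and then recovers the multivariate Lindeberg CLT in Remark~1 along exactly the lines you sketch: Lindeberg's condition forces $\mathrm{Lin}(\{\Xi_{n,k}\}) = 0$, the array is infinitesimal by Proposition~2.3, Corollary~2.6 gives $\lambda_F(\Sigma_n \to \Xi) = 0$, and L\'evy's continuity theorem (Proposition~2.1) closes the argument. So your overall architecture---Stein's method with the exponential test function, leave-one-out plus second-order Taylor, truncation at level~$\epsilon$, L\'evy---matches the paper's route.

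The one structural difference is the form of the Stein equation. You use the first-order equation $f_t' - x f_t = e_t - e^{-t^2/2}$ and look for a sup-norm bound $\|f_t''\|_\infty \leq \Phi(t)$. The paper works instead with the second-order (Ornstein--Uhlenbeck generator) form $\langle x,\nabla f\rangle - \Delta f = \mathbb{E}[h(\Xi)] - h$, and rather than merely bounding the Hessian it derives an \emph{exact} closed formula (Proposition~3.5):
\[
\mathrm{Hess}\,f_{e_t}(x) = \tfrac{1}{2}\, t t^\tau \int_0^1 \exp\bigl(-i\sqrt{s}\,\langle t,x\rangle - \tfrac{1}{2}(1-s)|t|^2\bigr)\,ds.
\]
This explicit representation is what lets the paper control the $t$-dependence precisely: the factor $\tfrac{1}{2}|t|^2\int_0^1 e^{-\frac{1}{2}(1-s)|t|^2}\,ds = 1 - e^{-|t|^2/2}$ is bounded by $1$ uniformly in $t$, yielding the constant~$2$ in Corollary~2.6 and a bound independent of the dimension~$N$. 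Your approach would certainly deliver the qualitative theorem for each fixed $t$, but a generic Stein bound on $f_t''$ typically grows with $|t|$, so you would not recover the paper's quantitative statement without carrying out essentially the same Hessian computation you correctly flag as the technical heart of the matter.
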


Recall that the {\em Kolmogorov distance} $K$ between random variables $\eta$ and $\eta^\prime$ is defined as
\begin{equation*}
\sup_{x \in \mathbb{R}} \left|F_\eta(x) - F_{\eta^\prime}(x)\right|
\end{equation*}
where
\begin{equation*}
F_{\zeta}(x) = \mathbb{E}\left[1_{\left]-\infty,x\right]}\left(\zeta\right)\right] = \mathbb{P}\left[\zeta \leq x\right]
\end{equation*}
represents the cumulative distribution function of the random variable $\zeta$. It is well known that $K$ metrizes weak convergence to a continuously distributed random variable.

The following powerful result was obtained by Feller in \cite{F}.

\begin{thm}\label{thm:BHall}
There exists a universal constant $C > 0$ such that
\begin{align}
&\sup_{x \in \mathbb{R}} \left|F_{\xi}(x) - F_{S_n}(x)\right|\nonumber\\
&\leq C \left(\sum_{k=1}^n \mathbb{E}\left[\xi_{n,k}^2 ; \left|\xi_{n,k}\right| > 1\right] +\sum_{k=1}^n \mathbb{E}\left[\left|\xi_{n,k}\right|^3 ; \left|\xi_{n,k}\right| \leq 1\right]\right).\label{LocalUpperBoundIneqBHall}
\end{align}
\end{thm}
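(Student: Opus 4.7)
The plan is to approach the result via Stein's method with indicator test functions, using a truncation of each $\xi_{n,k}$ at level $1$ to interpolate between tail second-moment and small-value third-moment contributions.

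Fix $z \in \mathbb{R}$ and let $f_z$ be the bounded solution of the Stein equation
\[
f_z'(x) - x f_z(x) = \mathbf{1}_{(-\infty,z]}(x) - F_\xi(z),
\]
so that $\|f_z\|_\infty$ and $\|f_z'\|_\infty$ are controlled by universal constants and
\[
F_{S_n}(z) - F_\xi(z) = \mathbb{E}\bigl[f_z'(S_n) - S_n f_z(S_n)\bigr].
\]
Writing $S_n^{(k)} = S_n - \xi_{n,k}$, I would expand $\mathbb{E}[S_n f_z(S_n)] = \sum_k \mathbb{E}[\xi_{n,k} f_z(S_n)]$ and use $\mathbb{E}[\xi_{n,k}]=0$ together with the independence of $\xi_{n,k}$ from $S_n^{(k)}$ to obtain
\[
\mathbb{E}[\xi_{n,k} f_z(S_n)] = \mathbb{E}\!\left[\int_\mathbb{R} f_z'(S_n^{(k)} + s)\,\hat K_k(s)\,ds \right],
\]
where the non-negative kernel $\hat K_k(s) = \xi_{n,k}(\mathbf{1}\{0 \leq s \leq \xi_{n,k}\} - \mathbf{1}\{\xi_{n,k} \leq s \leq 0\})$ satisfies $\int \hat K_k = \xi_{n,k}^2$, $\sum_k \mathbb{E}\!\int \hat K_k = 1$, and $\int |s|\,\hat K_k(s)\,ds = |\xi_{n,k}|^3/2$. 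Combining this with the independence identity $\mathbb{E}[\xi_{n,k}^2 f_z'(S_n^{(k)})] = \mathbb{E}[\xi_{n,k}^2]\,\mathbb{E}[f_z'(S_n^{(k)})]$ and the normalization $\sum_k\mathbb{E}[\xi_{n,k}^2]=1$ rearranges $\mathbb{E}[f_z'(S_n)-S_n f_z(S_n)]$ into the sum of a main term $\sum_k \mathbb{E}[\int \hat K_k(s)(f_z'(S_n^{(k)}) - f_z'(S_n^{(k)}+s))\,ds]$ and a remainder $\sum_k \mathbb{E}[\xi_{n,k}^2]\bigl(\mathbb{E}[f_z'(S_n)] - \mathbb{E}[f_z'(S_n^{(k)})]\bigr)$.

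Each of these is then bounded by splitting according to whether $|\xi_{n,k}|>1$ or $|\xi_{n,k}|\leq 1$. On $\{|\xi_{n,k}|>1\}$ the crude estimate $|f_z'(\,\cdot\,) - f_z'(\,\cdot\,)| \leq 2\|f_z'\|_\infty$ combined with $\int \hat K_k = \xi_{n,k}^2$ delivers a contribution $\lesssim \sum_k \mathbb{E}[\xi_{n,k}^2; |\xi_{n,k}|>1]$. On $\{|\xi_{n,k}|\leq 1\}$ the increment $s$ is bounded by $1$, and the Lipschitz behaviour of $f_z'$ on each side of the jump, paired with the moment identity $\int|s|\,\hat K_k(s)\,ds = |\xi_{n,k}|^3/2$, produces the third-moment contribution $\lesssim \sum_k \mathbb{E}[|\xi_{n,k}|^3; |\xi_{n,k}|\leq 1]$. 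The remainder $R_n$ is treated analogously by the same dichotomy, using $|\mathbb{E}[f_z'(S_n)] - \mathbb{E}[f_z'(S_n^{(k)})]|$ bounded by a Lipschitz-plus-jump quantity in $\xi_{n,k}$. Taking $\sup_z$ of the resulting estimate yields \eqref{LocalUpperBoundIneqBHall}.

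The main technical obstacle is the jump discontinuity of $f_z'$ at $z$, which destroys the naive Lipschitz estimate precisely in the small-value regime where it is most needed. I would absorb this singularity via a Stein-type concentration inequality for $S_n^{(k)}$ of the form $\mathbb{P}[a < S_n^{(k)} \leq b] \lesssim (b-a) + \text{(controllable error)}$, valid for intervals $[a,b] \subset [z-2,z+2]$; such an inequality follows from a second application of Stein's method with the indicator of $(a,b]$ as test function, and lets the jump of $f_z'$ integrate against $|s|$ to produce a cubic rather than quadratic contribution, thereby preserving the sharpness of Feller's bound.
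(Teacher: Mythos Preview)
The paper does not supply its own proof of this theorem: it is quoted as a known result, with references to Feller \cite{F} for the original proof, to Barbour and Hall \cite{BH} for the first Stein's-method proof, and to Chen and Shao \cite{CS} for the sharpened constant via the concentration-inequality approach. There is therefore nothing in the paper to compare your argument against line by line.

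That said, your sketch is a faithful outline of the Barbour--Hall/Chen--Shao route the paper cites. The decomposition via the kernel $\hat K_k$, the truncation dichotomy $\{|\xi_{n,k}|>1\}$ versus $\{|\xi_{n,k}|\le 1\}$, and the recognition that the jump of $f_z'$ at $z$ is the real obstacle (to be absorbed by a concentration inequality for $S_n^{(k)}$) are exactly the ingredients of the Chen--Shao proof. One small caution: your phrase ``Lipschitz behaviour of $f_z'$ on each side of the jump'' is slightly misleading, since $f_z'$ is not globally Lipschitz; the relevant increment bound is of the form $|f_z'(x+s)-f_z'(x)|\le c|s| + \mathbf{1}\{z\in[x\wedge(x+s),\,x\vee(x+s)]\}$, and it is the indicator term that the concentration inequality converts into an $O(|s|)$ contribution in expectation. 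Also, the concentration inequality itself has an error term of the same Lindeberg/third-moment type, so one must check it feeds back into the right-hand side of \eqref{LocalUpperBoundIneqBHall} without circularity; this works but deserves a sentence. With those points made explicit, your argument would constitute a complete proof along the lines the paper attributes to \cite{BH} and \cite{CS}.
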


It was shown in \cite{F} that the constant $C$ in (\ref{LocalUpperBoundIneqBHall}) can be taken equal to 6. The first proof of (\ref{LocalUpperBoundIneqBHall}) based on Stein's method was given by Barbour and Hall in \cite{BH}. More recently, the result was improved by Chen and Shao in \cite{CS}, where it was shown that $C$ can be taken equal to 4.1. The proof in \cite{CS} is based on Chen's concentration inequality approach in combination with Stein's method.

Theorem \ref{thm:BHall} has two important consequences.

The first is immediate. It is known as the Berry-Esseen inequality.

\begin{thm}\upshape{(Berry-Esseen inequality)}
{\em There exists a universal constant $C > 0$ such that}
\begin{equation}
\sup_{x \in \mathbb{R}} \left|F_{\xi}(x) - F_{S_n}(x)\right| \leq C \sum_{k=1}^n \mathbb{E}\left[\left|\xi_{n,k}\right|^3\right] .\label{BerryEsseenIneq}
\end{equation}
\end{thm}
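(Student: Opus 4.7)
The plan is to deduce the Berry-Esseen inequality directly from Theorem \ref{thm:BHall} by bounding each of the two truncated moments on the right-hand side of (\ref{LocalUpperBoundIneqBHall}) by the untruncated third absolute moment. No further probabilistic machinery should be needed, which is why the excerpt labels the corollary as immediate.

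First, I would handle the large-values term. On the event $\{|\xi_{n,k}| > 1\}$ one has $\xi_{n,k}^2 \leq |\xi_{n,k}|^3$, so
\[
\sum_{k=1}^n \mathbb{E}\left[\xi_{n,k}^2 ; |\xi_{n,k}| > 1\right] \leq \sum_{k=1}^n \mathbb{E}\left[|\xi_{n,k}|^3 ; |\xi_{n,k}| > 1\right].
\]
Next I would handle the small-values term trivially, since an indicator can only make an integrand smaller in absolute value:
\[
\sum_{k=1}^n \mathbb{E}\left[|\xi_{n,k}|^3 ; |\xi_{n,k}| \leq 1\right] \leq \sum_{k=1}^n \mathbb{E}\left[|\xi_{n,k}|^3 ; |\xi_{n,k}| \leq 1\right].
\]
Adding the two bounds and recognising that the indicators $\mathbf{1}_{\{|\xi_{n,k}| > 1\}}$ and $\mathbf{1}_{\{|\xi_{n,k}| \leq 1\}}$ partition the sample space, the sum telescopes to $\sum_{k=1}^n \mathbb{E}[|\xi_{n,k}|^3]$.

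Substituting this into (\ref{LocalUpperBoundIneqBHall}) yields the claimed inequality (\ref{BerryEsseenIneq}) with the very same universal constant $C$ supplied by Theorem \ref{thm:BHall}. There is genuinely no obstacle here; the only "trick" is recognising that the cutoff at level $1$ has been chosen precisely so that $x^2 \leq |x|^3$ on one side of the threshold while $|x|^3 \leq |x|^3$ holds tautologically on the other, making the piecewise bound collapse into a single third-moment bound.
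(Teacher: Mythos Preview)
Your argument is correct and is exactly the ``immediate'' deduction the paper has in mind: on $\{|\xi_{n,k}|>1\}$ one has $\xi_{n,k}^2\le|\xi_{n,k}|^3$, while the other term is already a truncated third moment, so the two pieces recombine into $\sum_k \mathbb{E}[|\xi_{n,k}|^3]$ and (\ref{BerryEsseenIneq}) follows from (\ref{LocalUpperBoundIneqBHall}) with the same constant. There is nothing to add.
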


It was shown by Shevtsova in \cite{Sh10} that the constant $C$ in (\ref{BerryEsseenIneq}) can be taken equal to 0.56.

For the second consequence, we recall that it was pointed out by Loh in \cite{L} that the truncation at $1$ in (\ref{LocalUpperBoundIneqBHall}) is optimal in the sense that
\begin{align}
& \sum_{k=1}^n \mathbb{E}\left[\xi_{n,k}^2 ; \left|\xi_{n,k}\right| > 1\right] + \sum_{k=1}^n \mathbb{E}\left[\left|\xi_{n,k}\right|^3 ; \left|\xi_{n,k}\right| \leq 1\right]\nonumber\\
& \leq \myinf_{A}  \left(\sum_{k=1}^n \mathbb{E}\left[\xi_{n,k}^2 ; A\right] + \sum_{k=1}^n \mathbb{E}\left[\left|\xi_{n,k}\right|^3 ; \mathbb{R} \setminus A\right]\right),\label{LocalBoundOptimal}
\end{align}
the infimum being taken over all Borel subsets $A$ of the real line. Thus, applying (\ref{LocalUpperBoundIneqBHall}) and (\ref{LocalBoundOptimal}), we get, for each $\epsilon > 0$,
\begin{align*}
&\sup_{x \in \mathbb{R}} \left|F_{\xi}(x) - F_{S_n}(x)\right|\\
&\leq C \left(\sum_{k=1}^n \mathbb{E}\left[\xi_{n,k}^2 ; \left|\xi_{n,k}\right| > 1 \right] + \sum_{k=1}^n \mathbb{E}\left[\left|\xi_{n,k}\right|^3 ; \left|\xi_{n,k}\right| \leq 1\right]\right)\\
&\leq C \left(\sum_{k=1}^n \mathbb{E}\left[\xi_{n,k}^2 ; \left|\xi_{n,k}\right| > \epsilon \right] + \sum_{k=1}^n \mathbb{E}\left[\left|\xi_{n,k}\right|^3 ; \left|\xi_{n,k}\right| \leq \epsilon\right]\right)\\
&\leq C \left(\sum_{k=1}^n \mathbb{E}\left[\xi_{n,k}^2 ; \left|\xi_{n,k}\right| > \epsilon \right] + \epsilon \sum_{k=1}^n \mathbb{E}\left[\left|\xi_{n,k}\right|^2\right]\right)\\
&= C \left(\sum_{k=1}^n \mathbb{E}\left[\xi_{n,k}^2 ; \left|\xi_{n,k}\right| > \epsilon \right] + \epsilon\right)
\end{align*}
which, after calculating the superior limit of both sides and letting $\epsilon \downarrow 0$, yields
\begin{align}
&\limsup_{n \rightarrow \infty} \sup_{x \in \mathbb{R}} \left|F_{\xi}(x) - F_{S_n}(x)\right|\nonumber\\
&\leq C \sup_{\epsilon > 0} \limsup_{n \rightarrow \infty} \sum_{k=1}^n \mathbb{E}\left[\xi_{n,k}^2 ; \left|\xi_{n,k}\right| > \epsilon \right].\label{Ineq2Cor}
\end{align}

Inspired by (\ref{Ineq2Cor}), the {\em Lindeberg index} of $\left\{\xi_{n,k}\right\}$ was introduced by the authors in \cite{BLV13} as
\begin{equation*}
\textrm{\upshape{Lin}}\left(\left\{\xi_{n,k}\right\}\right) = \sup_{\epsilon > 0} \limsup_{n \rightarrow \infty} \sum_{k=1}^n \mathbb{E}\left[\xi_{n,k}^2 ; \left|\xi_{n,k}\right| > \epsilon \right].\label{LindebergIndex}
\end{equation*}
It is clear that $0 \leq \textrm{\upshape{Lin}}\left(\left\{\xi_{n,k}\right\}\right)  \leq 1$ and that $\left\{\xi_{n,k}\right\}$ satisfies Lindeberg's condition if and only if $\textrm{\upshape{Lin}}\left(\left\{\xi_{n,k}\right\}\right) = 0$. 

The following example, taken from \cite{BLV13}, provides some insight into how the Lindeberg index behaves.

Let $0 < \alpha < 1$, $\beta =  \frac{\alpha}{1 - \alpha}$
and set
\begin{equation}
s_n^2 = (1 + \beta) n - \beta \sum_{k=1}^n k^{-1} = n + \beta \sum_{k=1}^{n} \left(1 - k^{-1}\right).\label{deta0}
\end{equation}
Notice that $s_n^2 \rightarrow \infty$. Now consider the 1-STA $\left\{\eta_{\alpha,n,k}\right\}$ such that 
\begin{equation}
\mathbb{P}\left[\eta_{\alpha,n,k} = -1/s_n\right] = \mathbb{P}\left[\eta_{\alpha,n,k} = 1/s_n\right] = \frac{1}{2}\left(1 - \beta k^{-1}\right)\label{deta1}
\end{equation}
and
\begin{equation}
\mathbb{P}\left[\eta_{\alpha,n,k} = -\sqrt{k}/{s_n}\right] = \mathbb{P}\left[\eta_{\alpha,n,k} = \sqrt{k}/{s_n}\right] = \frac{1}{2}\beta k^{-1}.\label{deta2}
\end{equation} 
Then it was shown in \cite{BLV13} (Proposition 2.2) that
\begin{equation*}
\textrm{\upshape{Lin}}\left(\left\{\eta_{\alpha,n,k}\right\}\right) = \alpha
\end{equation*}
and that $\left\{\eta_{\alpha,n,k}\right\}$ is {\em infinitesimal} in the sense that
\begin{equation*}
\forall \epsilon > 0 : \max_{k=1}^n \mathbb{P}\left[\left|\eta_{\alpha,n,k}\right| > \epsilon\right] \rightarrow 0.
\end{equation*}

Now, as a second consequence of Theorem \ref{thm:BHall}, the following quantitative version of the Lindeberg CLT is yielded by (\ref{Ineq2Cor}).

\begin{thm}\upshape{(Quantitative Lindeberg CLT)}\label{thm:BerckIneq}
{\em There exists a universal constant $C > 0$ such that}
\begin{equation}
\limsup_{n \rightarrow \infty} \sup_{x \in \mathbb{R}} \left|F_{\xi}(x) - F_{S_n}(x)\right| \leq C \textrm{\upshape{Lin}}\left(\left\{\xi_{n,k}\right\}\right).\label{BerckIneq}
\end{equation} 
\end{thm}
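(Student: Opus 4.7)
The plan is to recognize Theorem \ref{thm:BerckIneq} as essentially a repackaging of the chain of inequalities culminating in (\ref{Ineq2Cor}), rather than an independent result requiring a fresh argument. Indeed, the derivation of (\ref{Ineq2Cor}) from Theorem \ref{thm:BHall} has already been spelled out in the discussion immediately preceding the statement, so the proof reduces to combining that derivation with the definition of $\Lin$.

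In more detail, I would start from Feller's bound (Theorem \ref{thm:BHall}) and invoke Loh's optimality observation (\ref{LocalBoundOptimal}) to replace the truncation at level $1$ by a truncation at an arbitrary $\epsilon > 0$. This swap is the one non-trivial ingredient, since Feller's theorem is stated only for threshold $1$; once it is available, one bounds the truncated third-moment piece by $\epsilon \sum_{k=1}^n \mathbb{E}[|\xi_{n,k}|^2] = \epsilon$, using the normalisation condition (3) in the definition of a 1-STA. This yields an $\epsilon$-dependent upper bound of the form $C\bigl(\sum_{k=1}^n \mathbb{E}[\xi_{n,k}^2; |\xi_{n,k}| > \epsilon] + \epsilon\bigr)$.

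Next I would pass to $\limsup_{n \to \infty}$ on both sides. Since the left-hand side is independent of $\epsilon$ and the tail sum $\sum_{k=1}^n \mathbb{E}[\xi_{n,k}^2; |\xi_{n,k}| > \epsilon]$ is monotone non-increasing in $\epsilon$, the supremum over $\epsilon > 0$ of its $\limsup$ agrees with the limit as $\epsilon \downarrow 0$. Identifying this supremum with $\Lin$ and letting the stray $\epsilon$ term vanish then delivers the claimed inequality with the same universal constant $C$ as in Theorem \ref{thm:BHall}.

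The main ``obstacle'' here is really transparency rather than depth: all of the genuine analytic work is outsourced to Feller's theorem and to Loh's optimality inequality, and the remainder is a bookkeeping argument that promotes a fixed truncation level to the Lindeberg index via monotonicity in $\epsilon$. The only point worth checking carefully is the interchange of $\sup_{\epsilon > 0}$ and $\limsup_{n \to \infty}$, which is justified precisely by the monotonicity of the tail second-moment sums.
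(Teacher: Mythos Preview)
Your proposal is correct and matches the paper's own argument essentially line for line: the paper derives (\ref{Ineq2Cor}) from Theorem \ref{thm:BHall} via Loh's optimality (\ref{LocalBoundOptimal}) and the normalisation $\sum_k \mathbb{E}[\xi_{n,k}^2]=1$, then obtains Theorem \ref{thm:BerckIneq} by taking $\limsup_n$, letting $\epsilon\downarrow 0$, and identifying the result with the Lindeberg index. Your remark that monotonicity in $\epsilon$ makes $\lim_{\epsilon\downarrow 0}\limsup_n$ coincide with $\sup_{\epsilon>0}\limsup_n$ is exactly the justification the paper leaves implicit.
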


Using an asymptotic smoothing technique and Stein's method, it was shown in \cite{BLV13} that under the mild assumption that $\left\{\xi_{n,k}\right\}$ be infinitesimal, the constant $C$ in (\ref{BerckIneq}) can be taken equal to $1$. Moreover in \cite{BLV13} it was also shown that the expression on the left side is actually an index of convergence for a natural approach structure and thus, if we denote the associated limit operator simply $\lambda$  the result reads

$$
\lambda ( S_n \rightarrow \xi) \leq  \textrm{\upshape{Lin}}\left(\left\{\xi_{n,k}\right\}\right).
$$

This was the situation in the one-dimensional case. We now turn to the multivariate case and see how the techniques and inequalities in the previous discussion can be extended. Throughout, we keep $N \in \mathbb{N}_0$ fixed and we let $\left|\cdot\right|$ stand for the norm and $\left\langle\cdot,\cdot\right\rangle$ for the inner product in Euclidean $N$-space $\mathbb{R}^N$. By a {\em random $N$-vector} we mean an $\mathbb{R}^N$-valued random variable. Furthermore, $\Xi$ is a standard normally distributed random $N$-vector and $\left\{\Xi_{n,k}\right\}$ an {\em $N$-dimensional standard triangular array} ($N$-STA), i.e. a triangular array of random $N$-vectors
\begin{equation*}
\begin{array}{cccc} 
\Xi_{1,1} &  &  \\
\Xi_{2,1} & \Xi_{2,2} & \\
\Xi_{3,1} & \Xi_{3,2} & \Xi_{3,3} \\
 & \vdots &
\end{array}
 \end{equation*}
with the following properties.
\begin{enumerate}
\item $\displaystyle{\forall n : \Xi_{n,1}, \ldots, \Xi_{n,n} \textrm{ are independent.}}$\\
\item $\displaystyle{\forall n, k : \mathbb{E}\left[\Xi_{n,k}\right] = 0.}$\\
\item $\displaystyle{\forall n : \textrm{\upshape{cov}}\left(\Sigma_n\right) = I_{N \times N}}$ with $\displaystyle{\Sigma_n = \sum_{k=1}^n \Xi_{n,k}}$.
\end{enumerate}
Note that the notion of $N$-STA coincides with the earlier introduced notion of 1-STA in the case where $N = 1$.

The Lindeberg CLT is now extended as follows (\cite{S11}).

\begin{thm}\label{NLCLT}\upshape{(Lindeberg CLT for random $N$-vectors)}
{\em Suppose that $\left\{\Xi_{n,k}\right\}$ satisfies} Lindeberg's condition {\em in the sense that}
\begin{equation*}
\forall \epsilon > 0 : \sum_{k=1}^n \mathbb{E}\left[\left|\Xi_{n,k}\right|^2 ; \left|\Xi_{n,k}\right| > \epsilon\right] \rightarrow 0.\label{LindebergConditionN}
\end{equation*}
{\em Then}
\begin{equation*}
\Sigma_n \stackrel{w}{\rightarrow} \Xi.
\end{equation*}	
\end{thm}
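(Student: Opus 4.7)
The natural plan is to reduce the multivariate statement to the one-dimensional Lindeberg CLT (Theorem \ref{LCLT}) by means of the Cramér--Wold device. It suffices to show that for every fixed $t \in \mathbb{R}^N$ we have $\langle t, \Sigma_n \rangle \stackrel{w}{\rightarrow} \langle t, \Xi \rangle$ in $\mathbb{R}$, since this characterizes weak convergence of random $N$-vectors and since $\langle t, \Xi \rangle$ is one-dimensional normal with mean $0$ and variance $|t|^2$.

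Fix $t \in \mathbb{R}^N$. The case $t = 0$ being trivial, assume $t \neq 0$ and define the rescaled real random variables
\begin{equation*}
\xi_{n,k}^{(t)} = \frac{\langle t, \Xi_{n,k} \rangle}{|t|}, \qquad 1 \leq k \leq n.
\end{equation*}
The first step is to check that $\{\xi_{n,k}^{(t)}\}$ is a 1-STA. Independence within each row is inherited from $\{\Xi_{n,k}\}$, the mean-zero property is immediate from linearity of expectation, and
\begin{equation*}
\mathbb{E}\left[\Bigl(\sum_{k=1}^n \xi_{n,k}^{(t)}\Bigr)^{\!2}\right] = \frac{1}{|t|^2}\left\langle t, \mathrm{cov}(\Sigma_n) t \right\rangle = \frac{\langle t, I_{N \times N} t\rangle}{|t|^2} = 1,
\end{equation*}
so the normalization is correct.

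The second step is to verify that the Lindeberg condition transfers to the array $\{\xi_{n,k}^{(t)}\}$. By the Cauchy--Schwarz inequality we have $|\langle t, \Xi_{n,k}\rangle| \leq |t|\,|\Xi_{n,k}|$, so for any $\epsilon > 0$ the event $\{|\xi_{n,k}^{(t)}| > \epsilon\}$ is contained in $\{|\Xi_{n,k}| > \epsilon\}$, and
\begin{equation*}
\sum_{k=1}^n \mathbb{E}\left[\left(\xi_{n,k}^{(t)}\right)^{\!2}; \left|\xi_{n,k}^{(t)}\right| > \epsilon\right] \leq \sum_{k=1}^n \mathbb{E}\left[|\Xi_{n,k}|^2; |\Xi_{n,k}| > \epsilon\right] \longrightarrow 0
\end{equation*}
by the multivariate Lindeberg hypothesis. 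Applying Theorem \ref{LCLT} to $\{\xi_{n,k}^{(t)}\}$ we obtain $\sum_{k=1}^n \xi_{n,k}^{(t)} \stackrel{w}{\rightarrow} \xi$, and multiplying by $|t|$ yields $\langle t, \Sigma_n \rangle \stackrel{w}{\rightarrow} |t|\,\xi$, which has the same distribution as $\langle t, \Xi \rangle$.

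The final step is to invoke the Cramér--Wold theorem to conclude $\Sigma_n \stackrel{w}{\rightarrow} \Xi$. There is no serious obstacle in this strategy: the only mildly delicate point is the passage from the vector-valued Lindeberg condition to its scalar counterpart along an arbitrary direction, which is precisely what the norm-inequality $|\langle t, x\rangle| \leq |t|\,|x|$ handles. Everything else is bookkeeping.
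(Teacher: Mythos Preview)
Your argument is correct: the Cram\'er--Wold reduction to the one-dimensional Lindeberg CLT is the classical route, and every step you wrote checks out. The only thing worth noting explicitly is that the inequality $(\xi_{n,k}^{(t)})^2 \leq |\Xi_{n,k}|^2$ is being used \emph{together} with the event inclusion $\{|\xi_{n,k}^{(t)}| > \epsilon\} \subset \{|\Xi_{n,k}| > \epsilon\}$ to bound the truncated second moments; you implicitly do both, but it is the pair of facts that makes the Lindeberg transfer work.

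The paper, however, does not prove Theorem~\ref{NLCLT} directly at all: it is simply cited from Stroock~\cite{S11}. What the paper \emph{does} do, in Remark~1 following Corollary~\ref{CorMainThm}, is recover Theorem~\ref{NLCLT} as a by-product of its Stein-method machinery: Lindeberg's condition forces $\textrm{Lin}(\{\Xi_{n,k}\}) = 0$ and (via Proposition~\ref{LinImpInf}) infinitesimality, whence Corollary~\ref{CorMainThm} gives $\lambda_F(\Sigma_n \to \Xi) = 0$, and Proposition~\ref{proLimOp} then yields weak convergence. So the paper's implicit proof runs through the explicit Hessian representation of the Stein solution for the test function $e_t$ and the resulting bound~(\ref{MainForm}), whereas your proof is entirely elementary and self-contained modulo the scalar CLT. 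Your approach is shorter and more direct for this particular statement; the paper's route is of course aimed at the genuinely quantitative inequality~(\ref{MainIneq}), for which Cram\'er--Wold by itself gives nothing.
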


It is customary to consider the distance 
\begin{equation*}
\sup_{A \in \mathcal{C}} \left|\mathbb{P}\left[\textrm{\upshape{H}} \in A\right] - \mathbb{P}\left[\textrm{\upshape{H}}^\prime \in A\right]\right|,
\end{equation*}
$\mathcal{C}$ being the collection of all convex Borel subsets of Euclidean $N$-space, between random $N$-vectors $\textrm{\upshape{H}}$ and $\textrm{\upshape{H}}^\prime$. Note that this distance is stronger than the earlier introduced Kolmogorov distance in the case where $N = 1$.

The question whether Theorem \ref{thm:BHall} can be extended to the multivariate setting is still open. However, multivariate versions of Stein's method (Barbour \cite{Ba}, G{\"o}tze \cite{G}, Goldstein and Rinott \cite{GR}, Chatterjee and Meckes \cite{CM}, Meckes \cite{M}, Reinert and R{\"o}llin \cite{RR}, Nourdin, Peccati and R{\'e}veillac \cite{NPR}) and of the Berry-Esseen inequality (G{\"o}tze \cite{G}, Rinott and Rotar \cite{RiRo}, Bentkus \cite{Be}, Bhattacharya and Holmes \cite{BhH}, Chen and Fang \cite{CF}) have been the object of extensive study. In this spirit, Chen and Fang have recently obtained the following result in \cite{CF}.

\begin{thm}\upshape{(Berry-Esseen inequality for random $N$-vectors)}
{\em There exists a universal constant $C > 0$ such that}
\begin{equation}
\sup_{A \in \mathcal{C}} \left|\mathbb{P}\left[\Xi \in A\right] - \mathbb{P}\left[\Sigma_n \in A\right]\right| \leq C \sqrt{N} \sum_{k=1}^n \mathbb{E}\left[\left|\Xi_{n,k}\right|^3\right].\label{NBE}
\end{equation}
\end{thm}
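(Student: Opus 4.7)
The plan is to use multivariate Stein's method for the Gaussian distribution, combined with a Gaussian smoothing of the indicator $\mathbf 1_A$ of a convex set $A\in\mathcal C$ and the sharp geometric estimate of Ball on the Gaussian measure of the boundary tube of a convex set; this latter estimate is what produces the $\sqrt N$ factor.

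For each smooth $g$, multivariate Stein's method produces a solution $f=f_g$ of the Stein equation
$$\Delta f(x)-\langle x,\nabla f(x)\rangle = g(x)-\mathbb E[g(\Xi)],$$
which can be represented through the Ornstein--Uhlenbeck semigroup. Substituting $\Sigma_n$ into this identity and using independence of the $\Xi_{n,k}$, $\mathbb E[\Xi_{n,k}]=0$, and $\sum_{k=1}^n\mathrm{cov}(\Xi_{n,k})=I_{N\times N}$, I would Taylor-expand $\nabla f(\Sigma_n)=\nabla f(W_k+\Xi_{n,k})$ to third order around $W_k=\Sigma_n-\Xi_{n,k}$. The zeroth- and first-order terms cancel after taking expectations, the quadratic terms telescope against $\Delta f(\Sigma_n)$ thanks to the covariance normalisation, and what remains is a Taylor remainder dominated by $\|D^3f\|_\infty\sum_{k=1}^n\mathbb E[|\Xi_{n,k}|^3]$.

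To handle the non-smooth indicator $\mathbf 1_A$, I replace it by the smoothed function $h_\epsilon(x)=\mathbb E[\mathbf 1_A(x+\epsilon Z)]$ with $Z$ an independent copy of $\Xi$ and $\epsilon>0$ a smoothing parameter to be tuned. The Ornstein--Uhlenbeck representation of $f_{h_\epsilon}$ yields $\|D^3f_{h_\epsilon}\|_\infty=O(1/\epsilon)$, while the error of replacing $\mathbf 1_A$ by $h_\epsilon$ costs a Gaussian boundary term controlled by Ball's inequality
$$\mathbb P[\Xi\in(\partial A)^\delta]\leq c\sqrt N\,\delta,\qquad A\in\mathcal C,\ \delta>0.$$
The analogous estimate with $\Sigma_n$ in place of $\Xi$ is obtained by an induction/bootstrap in $n$: letting $\gamma_n$ denote the left-hand side of the inequality to be proved, one expresses the $\Sigma_n$-boundary error as the $\Xi$-boundary error plus a term proportional to $\gamma_n$ and absorbs the latter into the left-hand side. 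Optimising $\epsilon$ so that $\epsilon^{-1}\sum_{k=1}^n\mathbb E[|\Xi_{n,k}|^3]$ balances $\sqrt N\,\epsilon$ then produces the claimed bound $C\sqrt N\sum_{k=1}^n\mathbb E[|\Xi_{n,k}|^3]$, with the trivial case of a large third-moment sum treated separately.

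I expect the principal obstacle to be the sharp geometric inequality of Ball, together with the bootstrap step that transfers it from $\Xi$ to $\Sigma_n$ and permits absorption of $\gamma_n$; by contrast the Stein-method algebra and the $O(1/\epsilon)$ regularity bound on $f_{h_\epsilon}$ from the Ornstein--Uhlenbeck semigroup are comparatively routine once the smoothing setup is in place.
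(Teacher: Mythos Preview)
The paper does not give its own proof of this theorem: it is quoted from Chen and Fang \cite{CF} as background, so there is no in-paper argument to compare against. Your outline is in the spirit of G\"otze \cite{G} and the exposition in \cite{BhH}, whereas \cite{CF} themselves obtain the $\sqrt{N}$ bound via a multivariate concentration inequality approach rather than smoothing.

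There is, however, a genuine gap in your final step. Balancing $\epsilon^{-1}\beta$ against $\sqrt{N}\,\epsilon$, where $\beta=\sum_{k=1}^n\mathbb{E}\bigl[|\Xi_{n,k}|^{3}\bigr]$, gives $\epsilon=(\beta/\sqrt{N})^{1/2}$ and hence a total of order $N^{1/4}\beta^{1/2}$, not $\sqrt{N}\,\beta$. In the nontrivial regime $\sqrt{N}\,\beta<1$ one has $N^{1/4}\beta^{1/2}>\sqrt{N}\,\beta$, so your optimisation yields a strictly weaker inequality than the one stated. The crude uniform bound $\|D^{3}f_{h_\epsilon}\|_\infty=O(\epsilon^{-1})$ is precisely what forces the $\epsilon^{-1}\beta$ term; to reach $C\sqrt{N}\,\beta$ one cannot simply balance these two terms. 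In the smoothing route one instead takes $\epsilon$ proportional to $\beta$ and replaces the blunt third-derivative bound by a finer estimate on the Taylor remainder that exploits the Ornstein--Uhlenbeck time integral together with the recursive hypothesis (so that the Stein error itself comes out as $O(\sqrt{N}\,\beta)$ after absorption), or one abandons smoothing altogether in favour of the concentration inequality machinery used in \cite{CF}. Either way, the ``balance $\epsilon^{-1}\beta$ against $\sqrt{N}\,\epsilon$'' heuristic does not close the argument for the bound as stated.
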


It was shown in \cite{CF} that the constant $C$ in (\ref{NBE}) can be taken equal to $115$. An issue of importance is the fact that the upper bound in (\ref{NBE}) is of order $O\left(\sqrt{N}\right)$, the sharpest obtained so far. We also notice that Bentkus has established in \cite{Be} an inequality of the type (\ref{NBE}) with an upper bound of order $O\left(\sqrt[4]{N}\right)$ under the additional assumption that $\Xi_{n,1}, \ldots, \Xi_{n,n}$ be identically distributed.

At this point it is natural to ask for a version of Theorem \ref{thm:BerckIneq} for random $N$-vectors, but, even with a multivariate version of Stein's method at hand, there seem to be some intrinsic obstructions towards obtaining such a result. However, if, in the spirit of e.g. \cite{GJT}, we consider $\phi_\Xi$ and $\phi_{\Sigma_n}$, where
\begin{equation*}
\phi_{\textrm{\upshape{H}}}(t) = \mathbb{E}\left[\exp\left(- i \left\langle t, \textrm{\upshape{H}} \right\rangle\right)\right],\quad t \in \mathbb{R}^N, 
\end{equation*}
represents the Fourier transform of the random $N$-vector $\textrm{\upshape{H}}$, instead of the cumulative distribution functions $F_{\Xi}$ and $F_{\Sigma_n}$, then we can show that Stein's method as outlined in e.g. \cite{M}, \cite{NPR} and \cite{CF} becomes applicable to get our main results, Theorem \ref{MainThm} and Corollary \ref{CorMainThm}. The latter is a quantitative multivariate Lindeberg CLT of the same taste as Theorem \ref{thm:BerckIneq}. The crux of the matter consists in deriving an explicit integral representation for the Hessian matrix of a solution to the Stein equation with test function $e_t(x) = \exp (- i \left\langle t,x\right\rangle)$, where $t,x \in \mathbb{R}^N$ (Proposition \ref{lem:GoodFormulaHessChar}).

\section{Formulation of the main results}

We keep the terminology and the notation of the previous section.

Let $\phi_{\textrm{\upshape{H}}}$ be the Fourier transform of the random $N$-vector H. That is, for $t,x \in \mathbb{R}^N$,
\begin{equation*}
\phi_{\textrm{\upshape{H}}}(t) = \mathbb{E}\left[e_t(\textrm{\upshape{H}})\right]
\end{equation*}
with
\begin{equation*}
e_t(x) = \exp(- i \left\langle t, x \right \rangle).
\end{equation*}

It is well-known that if we put $\rrr$ the space of all random $N$-vectors then
$$
(\phi_{\textrm{\upshape{H}}}: \rrr, \ttt_w \rightarrow \mathbb{R}, \ttt_{d_E}: H \mapsto \mathbb{E}\left[e_t(\textrm{\upshape{H}})\right])_{t \in \mathbb{R}^N}
$$
is an initial source, i.e.~the topology of weak convergence is the weakest topology making all these maps continuous. We now lift this canonical way to obtain the weak topology from the topological to the approach level. In order to do this we simply replace the Euclidean topology on $\mathbb{R}$ by the Euclidean metric and take the initial approach structure (here simply denoted by its limit operator $\lambda$) rather than the initial topology

$$
(\phi_{\textrm{\upshape{H}}}: \rrr, \lambda \rightarrow \mathbb{R}, d_E: H \mapsto \mathbb{E}\left[e_t(\textrm{\upshape{H}})\right])_{t \in \mathbb{R}^N}
$$

Note that an initial metric does not exist, it makes no sense to ask for a ``weakest'' metric, it is absolutely required to go to the realm of approach spaces in order to find a solution.

It then follows from general results in approach theory (see e.g.~\cite{L15}) that the limit operator in this space is given by the following formula, where $\Sigma_n, n \in \mathbb{N}$ and $\Xi$ are random variables:
\begin{equation*}
\lambda\left(\Sigma_n \rightarrow \Xi\right) = \sup_{t \in \mathbb{R}^N} \limsup_{n \rightarrow \infty} \left|\phi_\Xi(t) - \phi_{\Sigma_n}(t)\right|.\label{FLimitOperator}
\end{equation*}
We refer the reader interested in the fundamentals of approach theory to \cite{L97}, \cite{L15}, \cite{BLV11} and \cite{BLV11An}. For the sake of this paper, the following result, which reveals that the number $\lambda\left(\Sigma_n \rightarrow \Xi\right)$ indeed measures how far the sequence $(\Sigma_n)_n$ deviates from being weakly convergent to $\Xi$, suffices.

\begin{pro}\label{proLimOp}
\begin{equation}
0 \leq \lambda\left(\Sigma_n \rightarrow \Xi\right) \leq 2\label{limop}
\end{equation}
and 
\begin{equation}
\lambda\left(\Sigma_n \rightarrow \Xi\right) = 0 \Leftrightarrow \Sigma_n \stackrel{w}{\rightarrow} \Xi.\label{limop0}
\end{equation}
\end{pro}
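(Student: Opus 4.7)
The plan is to handle the two assertions separately, both of which reduce to classical facts about characteristic functions.

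For (\ref{limop}), I would first note that the integrand $e_t(x) = \exp(-i\langle t,x\rangle)$ has unit modulus, so the Fourier transform of every random $N$-vector is bounded in modulus by $1$. Consequently $|\phi_\Xi(t) - \phi_{\Sigma_n}(t)| \leq 2$ uniformly in $t$ and $n$, and the nonnegativity is immediate since $\lambda_F(\Sigma_n \to \Xi)$ is a supremum of absolute values.

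For the equivalence (\ref{limop0}), the direction $\Leftarrow$ would be established first. Assuming $\Sigma_n \stackrel{w}{\rightarrow} \Xi$, the function $x \mapsto e_t(x)$ is bounded and continuous on $\mathbb{R}^N$ for every fixed $t$, so the portmanteau theorem gives $\phi_{\Sigma_n}(t) = \mathbb{E}[e_t(\Sigma_n)] \to \mathbb{E}[e_t(\Xi)] = \phi_\Xi(t)$ pointwise in $t$. Hence $\limsup_n |\phi_\Xi(t) - \phi_{\Sigma_n}(t)| = 0$ for every $t \in \mathbb{R}^N$, and taking the supremum over $t$ yields $\lambda_F(\Sigma_n \to \Xi) = 0$.

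Conversely, assuming $\lambda_F(\Sigma_n \to \Xi) = 0$, the supremum being zero forces $\limsup_n |\phi_\Xi(t) - \phi_{\Sigma_n}(t)| = 0$ for each $t \in \mathbb{R}^N$, i.e.\ $\phi_{\Sigma_n}(t) \to \phi_\Xi(t)$ pointwise. Since $\phi_\Xi$ is continuous at the origin (being a characteristic function of a random $N$-vector), L\'evy's continuity theorem in $\mathbb{R}^N$ delivers $\Sigma_n \stackrel{w}{\rightarrow} \Xi$, closing the equivalence. No step here poses a real obstacle; the proof is a direct application of the portmanteau theorem in one direction and of L\'evy's continuity theorem in the other.
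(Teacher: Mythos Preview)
Your proof is correct and follows essentially the same approach as the paper, which dismisses (\ref{limop}) as trivial and attributes (\ref{limop0}) to L\'evy's Continuity Theorem. You have simply unpacked these remarks in more detail, using the portmanteau theorem for the $\Leftarrow$ direction where the paper implicitly relies on the full equivalence statement of L\'evy's theorem.
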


\begin{proof}
(\ref{limop}) is trivial. (\ref{limop0}) follows from L{\'e}vy's Continuity Theorem, which states that weak convergence of random vectors is equivalent to pointwise convergence of their Fourier transforms.
\end{proof}

\begin{lem}\label{SumNSTA}
\begin{equation}
 \sum_{k = 1}^n \mathbb{E}\left[\left|\Xi_{n,k}\right|^2\right]  = N.\label{eq:SumNSTA}
\end{equation}
\end{lem}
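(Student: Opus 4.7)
The plan is to evaluate $\mathbb{E}\bigl[\left|\Sigma_n\right|^2\bigr]$ in two different ways and equate the results.

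On the one hand, since $\mathbb{E}[\Sigma_n] = 0$ by property (2), the quantity $\mathbb{E}\bigl[\left|\Sigma_n\right|^2\bigr]$ equals the trace of the covariance matrix $\textrm{\upshape{cov}}(\Sigma_n)$. By property (3), this covariance matrix is $I_{N \times N}$, whose trace is $N$. Hence $\mathbb{E}\bigl[\left|\Sigma_n\right|^2\bigr] = N$.

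On the other hand, expanding the squared norm gives
\begin{equation*}
\mathbb{E}\bigl[\left|\Sigma_n\right|^2\bigr] = \sum_{k,l = 1}^n \mathbb{E}\bigl[\left\langle \Xi_{n,k}, \Xi_{n,l}\right\rangle\bigr].
\end{equation*}
For $k \neq l$, independence (property (1)) together with the zero-mean condition (property (2)) forces the cross terms to vanish: writing out the inner product coordinatewise, each summand factors as $\mathbb{E}\bigl[\Xi_{n,k}^{(j)}\bigr] \, \mathbb{E}\bigl[\Xi_{n,l}^{(j)}\bigr] = 0$. Only the diagonal terms survive, yielding $\mathbb{E}\bigl[\left|\Sigma_n\right|^2\bigr] = \sum_{k = 1}^n \mathbb{E}\bigl[\left|\Xi_{n,k}\right|^2\bigr]$.

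Comparing the two expressions gives (\ref{eq:SumNSTA}). There is no real obstacle here; the only point requiring care is the bookkeeping when expanding the inner product in order to use independence componentwise, but this is straightforward.
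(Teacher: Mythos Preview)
Your proof is correct and follows essentially the same approach as the paper: both arguments use independence and the zero-mean property to identify $\sum_{k=1}^n \mathbb{E}\bigl[|\Xi_{n,k}|^2\bigr]$ with $\mathbb{E}\bigl[|\Sigma_n|^2\bigr]$, and then read off the value $N$ as the trace of $\textrm{\upshape{cov}}(\Sigma_n)=I_{N\times N}$. The only cosmetic difference is that the paper works coordinatewise from the start, whereas you frame it as computing $\mathbb{E}\bigl[|\Sigma_n|^2\bigr]$ in two ways.
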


\begin{proof}
The calculation
\begin{align*}
 \sum_{k = 1}^n \mathbb{E}\left[\left|\Xi_{n,k}\right|^2\right] & = \sum_{k = 1}^n \mathbb{E}\left[\sum_{l = 1}^N \Xi_{n,k,l}^2\right]\\
& = \sum_{l = 1}^N \mathbb{E}\left[\sum_{k = 1}^n \Xi_{n,k,l}^2\right]\\
& = \sum_{l = 1}^N \mathbb{E}\left[\left(\sum_{k = 1}^n \Xi_{n,k,l}\right)^2\right]\\
& = \sum_{l = 1}^N \textrm{\upshape{cov}}\left(\sum_{k = 1}^n \Xi_{n,k}\right)_{l,l}\\
& \quad (\textrm{\upshape{cov}}\left(\sum_{k = 1}^n \Xi_{n,k}\right) = I_{N \times N}) = N
\end{align*}
where $\Xi_{n,k} \textrm{ and } \Xi_{n,j} \textrm{ are independent if } k \neq j \textrm{ and } \mathbb{E}\left[\Xi_{n,k}\right] = 0 \textrm{ for all } k$ finishes the proof.
\end{proof}

We say that $\{\Xi_{n,k}\}$ is {\em infinitesimal} iff
\begin{equation*}
\forall \epsilon > 0 : \max_{k = 1}^n \mathbb{P}\left[\left|\Xi_{n,k}\right| > \epsilon\right] \rightarrow 0
\end{equation*}
and we extend the notion of {\em Lindeberg index} by putting
\begin{equation*}
\textrm{\upshape{Lin}}\left(\left\{\Xi_{n,k}\right\}\right) = \sup_{\epsilon > 0} \limsup_{n \rightarrow \infty} \sum_{k=1}^n \mathbb{E}\left[\left|\Xi_{n,k}\right|^2 ; \left|\Xi_{n,k}\right| > \epsilon\right].
\end{equation*}
It follows from Lemma \ref{SumNSTA} that $0 \leq \textrm{\upshape{Lin}}\left(\left\{\Xi_{n,k}\right\}\right) \leq N$ and it is clear that $\left\{\Xi_{n,k}\right\}$ satisfies Lindeberg's condition if and only if $\textrm{\upshape{Lin}}\left(\left\{\Xi_{n,k}\right\}\right) = 0$.

\begin{pro}\label{LinImpInf}
If $\left\{\Xi_{n,k}\right\}$ satisfies Lindeberg's condition, then it is infinitesimal.
\end{pro}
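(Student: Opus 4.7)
The plan is to reduce the pointwise bound on $\max_k \mathbb{P}[|\Xi_{n,k}| > \epsilon]$ to Lindeberg's sum via a Chebyshev-type estimate applied restricted to the event $\{|\Xi_{n,k}| > \epsilon\}$. The key observation is that on this event one has $|\Xi_{n,k}|^2 > \epsilon^2$, so $\mathbf{1}_{\{|\Xi_{n,k}| > \epsilon\}} \leq \epsilon^{-2} |\Xi_{n,k}|^2 \mathbf{1}_{\{|\Xi_{n,k}| > \epsilon\}}$, and taking expectation gives a bound on each individual tail probability by the corresponding truncated second moment.

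Concretely, I would fix $\epsilon > 0$ and, for every $k \in \{1,\dots,n\}$, write
\begin{equation*}
\mathbb{P}\left[|\Xi_{n,k}| > \epsilon\right] \leq \frac{1}{\epsilon^2}\,\mathbb{E}\left[|\Xi_{n,k}|^2 ; |\Xi_{n,k}| > \epsilon\right].
\end{equation*}
Since each summand in Lindeberg's sum is nonnegative, the maximum over $k$ is bounded by the full sum, so
\begin{equation*}
\max_{k=1}^n \mathbb{P}\left[|\Xi_{n,k}| > \epsilon\right] \leq \frac{1}{\epsilon^2}\sum_{k=1}^n \mathbb{E}\left[|\Xi_{n,k}|^2 ; |\Xi_{n,k}| > \epsilon\right].
\end{equation*}
Lindeberg's condition forces the right-hand side to tend to $0$ as $n \to \infty$ (for the fixed $\epsilon > 0$), which yields exactly the infinitesimality condition.

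There is essentially no obstacle here: the argument is the standard one-line Chebyshev reduction, and the step from Lindeberg's condition to infinitesimality is purely a matter of recognising that the maximum of nonnegative numbers is dominated by their sum. The argument does not use the multivariate structure of $\Xi_{n,k}$ beyond the norm $|\cdot|$, nor does it require the independence, centering, or covariance hypotheses defining an $N$-STA; it is a general statement about triangular arrays of random $N$-vectors.
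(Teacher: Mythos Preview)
Your proof is correct and in fact slightly more direct than the paper's. The paper applies the full Chebyshev inequality $\mathbb{P}[|\Xi_{n,k}|>\epsilon]\le \epsilon^{-2}\mathbb{E}[|\Xi_{n,k}|^2]$ and then splits the second moment at the threshold $\epsilon^2$, arriving at
\[
\max_{k=1}^n \mathbb{P}\left[|\Xi_{n,k}|>\epsilon\right]
\le \epsilon^{-2}\sum_{k=1}^n \mathbb{E}\left[|\Xi_{n,k}|^2;\ |\Xi_{n,k}|>\epsilon^2\right]+\epsilon^2,
\]
which leaves a residual $\epsilon^2$ and hence tacitly requires a further small-parameter argument to conclude. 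Your use of the restricted Markov bound $\mathbf{1}_{\{|\Xi_{n,k}|>\epsilon\}}\le \epsilon^{-2}|\Xi_{n,k}|^2\mathbf{1}_{\{|\Xi_{n,k}|>\epsilon\}}$ goes straight to the truncated second moment and lets Lindeberg's condition finish the proof in one step. It is the cleaner execution of essentially the same Chebyshev idea; the paper's detour through the full second moment buys nothing here.
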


\begin{proof}
For $\epsilon > 0$, Chebyshev's Inequality gives
\begin{align*}
&\max_{k=1}^n \mathbb{P}\left[\left|\Xi_{n,k}\right| > \epsilon\right]\\
& \leq \epsilon^{-2} \max_{k=1}^n  \mathbb{E}\left[\left|\Xi_{n,k}\right|^2\right]\\
& = \epsilon^{-2} \max_{k=1}^n \mathbb{E}\left[\left|\Xi_{n,k}\right|^2 ; \left|\Xi_{n,k}\right| > \epsilon^2\right] + \epsilon^{-2} \max_{k=1}^n \mathbb{E}\left[\left|\Xi_{n,k}\right|^2 ; \left|\Xi_{n,k}\right| \leq \epsilon^2\right]\\
& \leq \epsilon^{-2} \sum_{k = 1}^n \mathbb{E}\left[\left|\Xi_{n,k}\right|^2 ; \left|\Xi_{n,k}\right| > \epsilon^2\right] + \epsilon^2
\end{align*}
from which the proposition easily follows.
\end{proof}

For an $N$-STA $\{\Eta_{n,k}\}$, we define the auxiliary number
\begin{equation*}
L(\{\Xi_{n,k}\},\{\Eta_{n,k}\}) = \sup_{t \in \mathbb{R}^N} \limsup_{n \rightarrow \infty} \sum_{k = 1}^n \mathbb{E}\left[\left|\Xi_{n,k}\right|^2 ; \left|\langle\Eta_{n,k},t\rangle\right| > 1\right].\label{numberL}
\end{equation*}
Proposition \ref{LProps} below shows how $L(\{\Xi_{n,k}\},\{\Eta_{n,k}\})$ is linked to both the Lindeberg index and the condition of being infinitesimal.

\begin{pro}\label{LProps}
\begin{equation}
0 \leq L(\{\Xi_{n,k}\},\{\Eta_{n,k}\}) \leq N\label{IneqsL}.
\end{equation}
Also,
\begin{equation}
L(\{\Xi_{n,k}\},\{\Xi_{n,k}\}) \leq \textrm{\upshape{Lin}}\left(\left\{\Xi_{n,k}\right\}\right)\label{IneqLLin} 
\end{equation}
and the inequality in (\ref{IneqLLin}) becomes an equality if $N = 1$.
Finally, let $\left\{\Xi_{n,k}^0\right\}$ be any independent copy of $\left\{\Xi_{n,k}\right\}$. Then
\begin{equation}
\{\Xi_{n,k}\} \textrm{ is infinitesimal } \Rightarrow  L\left(\left\{\Xi_{n,k}\right\}, \left\{\Xi_{n,k}^0\right\}\right) = 0.\label{infL}
\end{equation}
\end{pro}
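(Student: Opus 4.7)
The plan is to handle the three claims of the proposition in turn; all three are essentially bookkeeping once the right inequality is identified, so the main obstacle is not technical depth but keeping the roles of $\{\Xi_{n,k}\}$ and $\{\Eta_{n,k}\}$ separate.

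For the bounds (\ref{IneqsL}), the lower bound is immediate from positivity. For the upper bound, since $\mathbf{1}_{|\langle \Eta_{n,k}, t\rangle| > 1} \leq 1$ we have
\begin{equation*}
\sum_{k=1}^n \mathbb{E}\left[|\Xi_{n,k}|^2 ; |\langle \Eta_{n,k}, t\rangle| > 1\right] \leq \sum_{k=1}^n \mathbb{E}\left[|\Xi_{n,k}|^2\right] = N
\end{equation*}
by Lemma \ref{SumNSTA}; taking $\limsup_n$ and then $\sup_t$ preserves the bound.

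For (\ref{IneqLLin}), the idea is to exploit Cauchy--Schwarz to replace $|\langle \Xi_{n,k}, t\rangle|$ with $|t|\,|\Xi_{n,k}|$. The event $\{|\langle \Xi_{n,k}, t\rangle| > 1\}$ is contained in $\{|\Xi_{n,k}| > 1/|t|\}$ for $t \neq 0$ (and is empty for $t = 0$), so
\begin{equation*}
\mathbb{E}\left[|\Xi_{n,k}|^2 ; |\langle \Xi_{n,k}, t\rangle| > 1\right] \leq \mathbb{E}\left[|\Xi_{n,k}|^2 ; |\Xi_{n,k}| > 1/|t|\right].
\end{equation*}
Summing, taking $\limsup_n$ and then $\sup_{t \neq 0}$, and reparametrising by $\epsilon = 1/|t|$ (which ranges over all of $(0, \infty)$), gives precisely $\textrm{\upshape{Lin}}(\{\Xi_{n,k}\})$. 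When $N = 1$, Cauchy--Schwarz is an equality, so the containment of events becomes equivalent and we recover equality.

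The main step is (\ref{infL}). Fix $t \in \mathbb{R}^N$, with $t = 0$ being trivial. Since $\Xi_{n,k}^0$ is independent of $\Xi_{n,k}$,
\begin{equation*}
\mathbb{E}\left[|\Xi_{n,k}|^2 ; |\langle \Xi_{n,k}^0, t\rangle| > 1\right] = \mathbb{E}\left[|\Xi_{n,k}|^2\right] \cdot \mathbb{P}\left[|\langle \Xi_{n,k}^0, t\rangle| > 1\right].
\end{equation*}
Pulling out the maximum probability and using Lemma \ref{SumNSTA} once more,
\begin{equation*}
\sum_{k=1}^n \mathbb{E}\left[|\Xi_{n,k}|^2 ; |\langle \Xi_{n,k}^0, t\rangle| > 1\right] \leq N \max_{k=1}^n \mathbb{P}\left[|\langle \Xi_{n,k}^0, t\rangle| > 1\right].
\end{equation*}
Another application of Cauchy--Schwarz plus the fact that $\Xi_{n,k}^0$ and $\Xi_{n,k}$ are identically distributed bounds this by $N \max_k \mathbb{P}[|\Xi_{n,k}| > 1/|t|]$, which tends to $0$ by infinitesimality (applied with $\epsilon = 1/|t|$). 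Hence the $\limsup_n$ is $0$ for every $t$, and (\ref{infL}) follows. The only subtle point is remembering to use the independence of the auxiliary copy to factor the expectation before invoking infinitesimality, since otherwise one would be forced into a uniform integrability argument on $\{|\Xi_{n,k}|^2\}_k$ that is not available here.
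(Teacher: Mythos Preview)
Your proof is correct and follows essentially the same approach as the paper: Lemma~\ref{SumNSTA} for (\ref{IneqsL}), Cauchy--Schwarz on the inner product to obtain the event inclusion for (\ref{IneqLLin}), and independence to factor the expectation followed by Cauchy--Schwarz and Lemma~\ref{SumNSTA} for (\ref{infL}). The only cosmetic difference is that the paper combines the factoring and the Cauchy--Schwarz step into a single inequality, whereas you separate them, but the substance is identical.
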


\begin{proof}
(\ref{eq:SumNSTA}) entails (\ref{IneqsL}). Furthermore, by the Cauchy-Schwarz Inequality, for $t \in \mathbb{R}^N \setminus \left\{0\right\}$,
\begin{equation*}
\sum_{k = 1}^n \mathbb{E}\left[\left|\Xi_{n,k}\right|^2 ; \left|\left\langle\Xi_{n,k},t\right\rangle\right| > 1\right] \leq \sum_{k = 1}^n \mathbb{E}\left[\left|\Xi_{n,k}\right|^2 ; \left|\Xi_{n,k}\right| > \left|t\right|^{-1}\right]
\end{equation*}
proving (\ref{IneqLLin}). If $N = 1$, then the inequality in (\ref{IneqLLin}) trivially becomes an equality. Finally, suppose that $\left\{\Xi_{n,k}\right\}$ is infinitesimal and let $\left\{\Xi^0_{n,k}\right\}$ be an independent copy of $\left\{\Xi_{n,k}\right\}$. Then, by the Cauchy-Schwarz Inquality and (\ref{eq:SumNSTA}), for $t \in \mathbb{R}^N \setminus \left\{0\right\}$,
\begin{align*}
&\sum_{k = 1}^n \mathbb{E}\left[\left|\Xi_{n,k}\right|^2 ; \left|\left\langle\Xi_{n,k}^0,t\right\rangle\right| > 1\right]\\
& \leq \sum_{k = 1}^n \mathbb{E}\left[\left|\Xi_{n,k}\right|^2 \right] \mathbb{P}\left[\left|\Xi_{n,k}^0\right| > \left|t\right|^{-1}\right]\\
& \leq \max_{k = 1}^n \mathbb{P}\left[\left|\Xi_{n,k}^0\right| > \left|t\right|^{-1}\right] \sum_{k = 1}^n \mathbb{E}\left[\left|\Xi_{n,k}\right|^2\right]\\
& = N \max_{k=1}^n \mathbb{P}\left[\left|\Xi_{n,k}^0\right| > \left|t\right|^{-1}\right]
\end{align*}
which establishes (\ref{infL}).
\end{proof}

We are now in a position to state our main results. The proof of Theorem \ref{MainThm} is deferred to the next section.

 \begin{thm}\label{MainThm} Let $\left\{\Xi_{n,k}^0\right\}$ be an independent copy of $\left\{\Xi_{n,k}\right\}$. Then
 
 \begin{equation}
\lambda(\Sigma_n \rightarrow \Xi) \leq 2 \left(L\left(\left\{\Xi_{n,k}\right\},\left\{\Xi_{n,k}\right\}\right) + L\left(\left\{\Xi_{n,k}\right\},\left\{\Xi_{n,k}^0\right\}\right) \right).\label{BigIneq}
\end{equation}

In particular, for $t \in \mathbb{R}^N$,
\begin{align}
& \limsup_{n \rightarrow \infty} \left|\phi_{\Xi}(t) - \phi_{\Sigma_n}(t)\right| \label{MainForm} \\
& \leq 2 \left(1 - \exp\left(-\frac{1}{2} \left|t\right|^2\right)\right) \left(L\left(\left\{\Xi_{n,k}\right\},\left\{\Xi_{n,k}\right\}\right) + L\left(\left\{\Xi_{n,k}\right\},\left\{\Xi_{n,k}^0\right\}\right) \right).\nonumber
\end{align}
 \end{thm}
 
 Theorem \ref{MainThm} has the following corollary, which is a multivariate quantitative Lindeberg CLT of the same taste as Theorem \ref{thm:BerckIneq}. The proof of this theorem requires several steps and intermediate results, therefore we defer it to the third section.
 
 \begin{gev}\label{CorMainThm}{\em(Quantitative Lindeberg CLT for the Fourier transforms of random $N$-vectors)}
 Suppose that $\left\{\Xi_{n,k}\right\}$ is infinitesimal. Then
 \begin{equation}
 \lambda(\Sigma_n \rightarrow \Xi) \leq 2 \textrm{\upshape{Lin}}\left(\left\{\Xi_{n,k}\right\}\right).\label{AppMainIneq}
 \end{equation}
 More explicitly,
 \begin{equation}
 \sup_{t \in \mathbb{R}^N} \limsup_{n \rightarrow \infty} \left|\phi_{\Xi}(t) - \phi_{\Sigma_n}(t)\right| \leq 2 \textrm{\upshape{Lin}}\left(\left\{\Xi_{n,k}\right\}\right).\label{MainIneq}
 \end{equation}
 \end{gev}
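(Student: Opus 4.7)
The plan is to derive Corollary \ref{CorMainThm} by chaining the inequality in Theorem \ref{MainThm} with the three parts of Proposition \ref{LProps}, exploiting the infinitesimality hypothesis to kill one of the two $L$-terms entirely.

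First I would fix an arbitrary $t \in \mathbb{R}^N$ and apply Theorem \ref{MainThm} to get
\begin{equation*}
\limsup_{n \rightarrow \infty} \left|\phi_\Xi(t) - \phi_{\Sigma_n}(t)\right| \leq 2\left(1 - \exp\left(-\tfrac{1}{2}|t|^2\right)\right)\left(L(\{\Xi_{n,k}\},\{\Xi_{n,k}\}) + L(\{\Xi_{n,k}\},\{\Xi_{n,k}^0\})\right).
\end{equation*}
Then I would trivially bound the prefactor using $1 - \exp(-|t|^2/2) \leq 1$ for every $t \in \mathbb{R}^N$, which absorbs all the $t$-dependence into a universal bound and makes the right-hand side independent of $t$.

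Next I would use the two structural statements of Proposition \ref{LProps}: by (\ref{IneqLLin}), $L(\{\Xi_{n,k}\},\{\Xi_{n,k}\}) \leq \textrm{\upshape{Lin}}(\{\Xi_{n,k}\})$, and by (\ref{infL}), the infinitesimality hypothesis on $\{\Xi_{n,k}\}$ forces $L(\{\Xi_{n,k}\},\{\Xi_{n,k}^0\}) = 0$. Substituting these into the previous display gives
\begin{equation*}
\limsup_{n \rightarrow \infty} \left|\phi_\Xi(t) - \phi_{\Sigma_n}(t)\right| \leq 2 \textrm{\upshape{Lin}}(\{\Xi_{n,k}\})
\end{equation*}
for every $t \in \mathbb{R}^N$, which is (\ref{MainIneq}). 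Taking the supremum over $t$ (the right-hand side is already independent of $t$) yields (\ref{AppMainIneq}) by the definition of $\lambda_F$.

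There is really no hard step here: Theorem \ref{MainThm} and Proposition \ref{LProps} have already done all the analytic work. The only thing to be careful about is the order of operations, namely that one must take $\limsup_n$ first (so as to invoke Theorem \ref{MainThm}) and only then the supremum over $t$, matching the definition of $\lambda_F$. Once that is observed, the corollary is a two-line consequence.
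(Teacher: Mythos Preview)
Your proof is correct and follows essentially the same route as the paper: combine Theorem \ref{MainThm} with the two relevant parts of Proposition \ref{LProps}, using infinitesimality to annihilate the $L(\{\Xi_{n,k}\},\{\Xi_{n,k}^0\})$ term. The only cosmetic difference is that the paper invokes the already $t$-free bound (\ref{BigIneq}) directly, whereas you start from (\ref{MainForm}) and bound $1 - e^{-|t|^2/2} \leq 1$ explicitly before taking the supremum.
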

 
 \begin{proof}
Recall that  Proposition \ref{LProps} entails $L\left(\left\{\Xi_{n,k}\right\},\left\{\Xi_{n,k}\right\}\right) \leq \textrm{\upshape{Lin}}\left(\left\{\Xi_{n,k}\right\}\right)$ and that, $\left\{\Xi_{n,k}\right\}$ being infinitesimal,  $L\left(\left\{\Xi_{n,k}\right\},\left\{\Xi_{n,k}^0\right\}\right) = 0$. Thus (\ref{BigIneq}) immediately gives (\ref{AppMainIneq}).
 \end{proof}
 
 {\bf Remarks} (1) Corollary \ref{CorMainThm} is stronger than Theorem \ref{NLCLT}. Indeed, suppose that $\left\{\Xi_{n,k}\right\}$ satisfies Lindeberg's condition. Then, by Proposition \ref{LinImpInf}, $\left\{\Xi_{n,k}\right\}$ is also infinitesimal. But then Corollary \ref{CorMainThm} implies that $\lambda(\Sigma_n \rightarrow \Xi) = 0$ and thus, by Proposition \ref{proLimOp}, $\Sigma_n \stackrel{w}{\rightarrow} \Xi$. The advantage of Theorem \ref{MainThm} is that it continues to be informative for STA's such as $\left\{\eta_{\alpha,n,k}\right\}$, defined by (\ref{deta0}), (\ref{deta1}) and (\ref{deta2}), for which Lindeberg's condition is not satisfied, whereas Theorem \ref{NLCLT} fails to be applicable for such STA's.
 
(2) For the large class of infinitesimal $N$-STA's, (\ref{MainIneq}) yields an upper bound which does not depend on the dimension $N$. This suggests the possibility of extending the result to an infinite dimensional setting. Such extensions will be discussed elsewhere.
 
(3) The left-hand side in (\ref{MainIneq}) is optimal in the sense that it is impossible to get similar upper bounds for $\limsup_{n \rightarrow \infty} \sup_{t \in \mathbb{R}^N} \left|\phi_{\Xi}(t) - \phi_{\Sigma_n}(t)\right|.$ Indeed, let $N = 1$ and consider i.i.d. random variables $\xi_1, \xi_2, \ldots$ with $\mathbb{P}\left[\xi_k = -1\right] = \mathbb{P}\left[\xi_k = 1\right] = 1/2$ and put
$\xi_{n,k} =  \xi_k / \sqrt{n}$ and $S_n = \sum_{k=1}^n \xi_{n,k}.$ Then $\left\{\xi_{n,k}\right\}$ is a 1-STA such that $\textrm{\upshape{Lin}}\left(\left\{\xi_{n,k}\right\}\right) = 0$, but, for each $n$, it holds that $\sup_{t \in \mathbb{R}} \left|\phi_{\xi}(t) - \phi_{S_n}(t)\right| = \sup_{t \in \mathbb{R}} \left|\exp\left(-t^2/2\right) - \cos^n\left(t/\sqrt{n}\right)\right| = 1.$

\section{Proof of Theorem \ref{MainThm}}

We keep the terminology and the notation of the previous sections.

The proof of Theorem \ref{MainThm} heavily depends on a multivariate version of Stein's method as outlined in e.g. \cite{M}, \cite{NPR} and \cite{CF}.

Let $h : \mathbb{R}^N \rightarrow \mathbb{C}$ be bounded and twice continuously differentiable with bounded first order and second order partial derivatives and let $f_h : \mathbb{R}^N \rightarrow \mathbb{C}$ be the solution to the Stein equation
\begin{equation}
\left\langle x , \nabla f(x) \right\rangle - \Delta f (x) = \mathbb{E}\left[h(\Xi)\right] - h(x)\label{eq:Stein}
\end{equation}
given by
\begin{equation}
 f_h(x) = - \int_0^1 \frac{1}{2s} \mathbb{E}\left[h(\Xi) - h\left(\sqrt{s} x + \sqrt{1 - s} \Xi\right)\right] ds,\label{eq:SteinSol}
\end{equation}
see \cite{M} or \cite{NPR}. Furthermore, let $\textrm{\upshape{Hess}} f_h (x)$ stand for the Hessian matrix of $f_h$ at $x$ and put
\begin{equation*}
D_{\textrm{\upshape{Hess}} f_h}(x,y) = \textrm{\upshape{Hess}}f_h(x) - \textrm{\upshape{Hess}} f_h(y). 
\end{equation*}
Finally, let $\left\{\Xi_{n,k}^0\right\}$ be an independent copy of $\left\{\Xi_{n,k}\right\}$. 

The following proposition follows from the explicit structure of the Stein equation.

\begin{pro}\label{lem:FirstLemma}
\begin{align}
& \mathbb{E}\left[h\left(\Xi\right) - h\left(\Sigma_n\right)\right]\label{eq:FirstLem}\\
&=\sum_{k=1}^n \int_0^1\mathbb{E}\left[\left\langle \Xi_{n,k},D_{\textrm{\upshape{Hess}} f_h}\left(\sum_{j \neq k} \Xi_{n,j} + r \Xi_{n,k},\sum_{j \neq k} \Xi_{n,j}\right) \Xi_{n,k}\right\rangle\right]dr\nonumber\\
& \quad - \sum_{k=1}^n\mathbb{E}\left[\left\langle\Xi_{n,k}, D_{\textrm{\upshape{Hess}} f_h}\left(\sum_{k = 1}^n \Xi_{n,k}^0,\sum_{j \neq k} \Xi_{n,j}^0\right)\Xi_{n,k}\right\rangle \right].\nonumber
\end{align}
\end{pro}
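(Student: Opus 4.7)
The plan is to apply the Stein equation \eqref{eq:Stein} at $x=\Sigma_n$, take expectation, and rewrite the two resulting terms in a form compatible with \eqref{eq:FirstLem}. Abbreviate $W_k = \sum_{j \neq k}\Xi_{n,j}$ and $W_k^0 = \sum_{j\neq k}\Xi_{n,j}^0$. Substituting $x = \Sigma_n$ into \eqref{eq:Stein} and taking expectation gives
\begin{equation*}
\mathbb{E}\left[h(\Xi) - h(\Sigma_n)\right] = \mathbb{E}\left[\langle \Sigma_n, \nabla f_h(\Sigma_n)\rangle\right] - \mathbb{E}\left[\Delta f_h(\Sigma_n)\right].
\end{equation*}
For the first term on the right, I split $\Sigma_n = W_k + \Xi_{n,k}$ for each $k$ and apply the fundamental theorem of calculus to $r \mapsto \nabla f_h(W_k + r\Xi_{n,k})$; since $\Xi_{n,k}$ is independent of $W_k$ and $\mathbb{E}[\Xi_{n,k}] = 0$, the contribution $\mathbb{E}\left[\langle \Xi_{n,k}, \nabla f_h(W_k)\rangle\right]$ vanishes, so
\begin{equation*}
\mathbb{E}\left[\langle \Sigma_n, \nabla f_h(\Sigma_n)\rangle\right] = \sum_{k=1}^n \int_0^1 \mathbb{E}\left[\langle \Xi_{n,k}, \textrm{\upshape{Hess}}\, f_h(W_k + r\Xi_{n,k})\Xi_{n,k}\rangle\right] dr.
\end{equation*}

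For the Laplacian term, I use that $\Sigma_n^0 := \sum_{k=1}^n \Xi_{n,k}^0$ has the same distribution as $\Sigma_n$ but is independent of every $\Xi_{n,k}$, combined with the elementary identity $\mathbb{E}\left[\langle v, Yv\rangle\right] = \textrm{trace}\left(Y\, \mathbb{E}[vv^T]\right)$ valid whenever the symmetric matrix $Y$ is independent of the random $N$-vector $v$. Applied to each $\Xi_{n,k}$ against $Y = \textrm{\upshape{Hess}}\, f_h(\Sigma_n^0)$, summed over $k$, and combined with the identity $\sum_{k=1}^n \mathbb{E}\left[\Xi_{n,k} \Xi_{n,k}^T\right] = I_{N \times N}$ (a rephrasing of $\textrm{\upshape{cov}}(\Sigma_n) = I_{N \times N}$ using independence and zero mean), this yields
\begin{equation*}
\mathbb{E}\left[\Delta f_h(\Sigma_n)\right] = \mathbb{E}\left[\Delta f_h(\Sigma_n^0)\right] = \sum_{k=1}^n \mathbb{E}\left[\langle \Xi_{n,k}, \textrm{\upshape{Hess}}\, f_h(\Sigma_n^0)\Xi_{n,k}\rangle\right].
\end{equation*}

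Finally, I expand the difference operator $D_{\textrm{\upshape{Hess}}\, f_h}(x,y) = \textrm{\upshape{Hess}}\, f_h(x) - \textrm{\upshape{Hess}}\, f_h(y)$ in the right-hand side of \eqref{eq:FirstLem} and compare with the two displays above. The expansion introduces two auxiliary "base-point" sums
\begin{equation*}
B := \sum_{k=1}^n \mathbb{E}\left[\langle \Xi_{n,k}, \textrm{\upshape{Hess}}\, f_h(W_k)\Xi_{n,k}\rangle\right], \qquad D := \sum_{k=1}^n \mathbb{E}\left[\langle \Xi_{n,k}, \textrm{\upshape{Hess}}\, f_h(W_k^0)\Xi_{n,k}\rangle\right],
\end{equation*}
which enter with opposite signs. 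Re-using the trace identity just described, and exploiting that $\Xi_{n,k}$ is independent of both $W_k$ and $W_k^0$ while $W_k$ and $W_k^0$ share the same distribution, each of $B$ and $D$ collapses to $\sum_k \mathbb{E}\left[\textrm{trace}\left(\textrm{\upshape{Hess}}\, f_h(W_k)\, \mathbb{E}[\Xi_{n,k} \Xi_{n,k}^T]\right)\right]$; hence $B = D$ and they cancel. What remains is precisely $\mathbb{E}[\langle \Sigma_n, \nabla f_h(\Sigma_n)\rangle] - \mathbb{E}[\Delta f_h(\Sigma_n)] = \mathbb{E}[h(\Xi) - h(\Sigma_n)]$, yielding \eqref{eq:FirstLem}. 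I expect the main obstacle to be organizational rather than analytic: one must spot that passing to the independent copy $\Sigma_n^0$ and invoking the covariance normalization together convert the Laplacian into a sum of quadratic forms with the same structure as the Taylor remainder from the gradient term, which is exactly what makes the base-point corrections built into $D_{\textrm{\upshape{Hess}}\, f_h}$ cancel.
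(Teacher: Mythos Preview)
Your proof is correct and follows essentially the same route as the paper: apply the Stein equation at $\Sigma_n$, use independence and $\mathbb{E}[\Xi_{n,k}]=0$ to drop $\mathbb{E}[\langle \Xi_{n,k},\nabla f_h(W_k)\rangle]$, use the Fundamental Theorem of Calculus on $r\mapsto \nabla f_h(W_k+r\Xi_{n,k})$, and convert $\mathbb{E}[\Delta f_h(\Sigma_n)]$ into $\sum_k \mathbb{E}[\langle \Xi_{n,k},\textrm{Hess}\,f_h(\Sigma_n^0)\Xi_{n,k}\rangle]$ via the covariance normalization and the independent copy. Your observation that the base-point terms $B$ and $D$ cancel because $W_k$ and $W_k^0$ are identically distributed and both independent of $\Xi_{n,k}$ is exactly the content of the paper's ``secondly'' step, just phrased by swapping the Hessian argument rather than the outer vectors.
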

\begin{proof}
The fact that $f_h$ is a solution to the Stein equation (\ref{eq:Stein}) leads to
\begin{align}
& \mathbb{E}\left[h\left(\Xi\right) - h\left(\Sigma_n\right)\right]\nonumber\\  
& =\sum_{k=1}^n \mathbb{E}\left[\left\langle \Xi_{n,k}, \nabla f_h\left(\sum_{k=1}^n \Xi_{n,k}\right) \right \rangle- \Delta f_h \left(\sum_{k=1}^n \Xi_{n,k}\right)\right].\label{aeq1}
\end{align}
Furthermore,
\begin{align}
& \sum_{k=1}^n \mathbb{E}\left[\left\langle \Xi_{n,k}, \nabla f_h\left(\sum_{k=1}^n \Xi_{n,k}\right) \right \rangle- \Delta f_h \left(\sum_{k=1}^n \Xi_{n,k}\right)\right]\nonumber\\
&=\sum_{k=1}^n \mathbb{E}\bigg[\bigg\langle \Xi_{n,k},\label{aeq2}\\
& \nabla f_h\left(\sum_{k = 1}^n \Xi_{n,k}\right) - \nabla f_h \left(\sum_{j \neq k} \Xi_{n,j}\right) - \textrm{\upshape{Hess}} f_h \left(\sum_{j \neq k} \Xi_{n,j}\right) \Xi_{n,k}\bigg\rangle\bigg] \nonumber \\
& - \sum_{k=1}^n\mathbb{E}\bigg[\bigg\langle\Xi_{n,k},\left(\textrm{\upshape{Hess}} f_h\left(\sum_{k=1}^n \Xi_{n,k}^0\right) - \textrm{\upshape{Hess}} f_h \left(\sum_{j \neq k} \Xi_{n,j}^0\right)\right)\Xi_{n,k} \bigg\rangle \bigg]\nonumber
\end{align}
which is seen by calculating the right-hand side and noticing the following three facts. Firstly,
\begin{align*}
&\sum_{k=1}^n \mathbb{E}\left[\left\langle\Xi_{n,k},\nabla f_h \left(\sum_{j \neq k} \Xi_{n,j}\right)\right\rangle\right]\\
& = \sum_{k=1}^n \sum_{l=1}^N \mathbb{E}\left[\Xi_{n,k,l} \frac{\partial f_h}{\partial x_l} \left(\sum_{j \neq k} \Xi_{n,j}\right)\right]\\
& \quad \quad (\Xi_{n,k} \textrm{ and } \sum_{j \neq k} \Xi_{n,j} \textrm{ are independent})\\
& = \sum_{k=1}^n \sum_{l=1}^N \mathbb{E}\left[\Xi_{n,k,l}\right] \mathbb{E}\left[\frac{\partial f_h}{\partial x_l} \left(\sum_{j \neq k} \Xi_{n,j}\right)\right]\\
&\quad \quad (\mathbb{E}\left[\Xi_{n,k}\right] = 0)\\
& = 0.
\end{align*}
Secondly,
\begin{align*}
& \sum_{k=1}^n \mathbb{E}\left[\left\langle\Xi_{n,k},\textrm{\upshape{Hess}} f_h\left(\sum_{j \neq k} \Xi_{n,j}\right) \Xi_{n,k}\right\rangle\right]\\
& =  \sum_{k=1}^n \sum_{l=1}^N \sum_{m =1}^N \mathbb{E}\left[\frac{\partial^2 f_h}{\partial x_l \partial x_m} \left(\sum_{j \neq k} \Xi_{n,j}\right) \Xi_{n,k,l} \Xi_{n,k,m}\right]\\
& \quad \quad (\Xi_{n,k} \textrm{ and } \sum_{j \neq k} \Xi_{n,j} \textrm{ are independent})\\
& = \sum_{k=1}^n \sum_{l=1}^N \sum_{m =1}^N \mathbb{E}\left[\frac{\partial^2 f_h}{\partial x_l \partial x_m} \left(\sum_{j \neq k} \Xi_{n,j}\right)\right] \textrm{\upshape{cov}}\left(\Xi_{n,k}\right)_{l,m}\\
& \quad \quad (\left\{\Xi^0_{n,k}\right\} \textrm{ is an independent  copy of } \left\{\Xi_{n,k}\right\})\\
& = \sum_{k=1}^n \sum_{l=1}^N \sum_{m =1}^N \mathbb{E}\left[\frac{\partial^2 f_h}{\partial x_l \partial x_m} \left(\sum_{j \neq k} \Xi^0_{n,j}\right)\right] \textrm{\upshape{cov}}\left(\Xi_{n,k}\right)_{l,m}\\
& \quad \quad (\Xi^0_{n,k} \textrm{ and } \sum_{j \neq k} \Xi_{n,j} \textrm{ are independent})\\ 
&  =  \sum_{k=1}^n \sum_{l=1}^N \sum_{m =1}^N \mathbb{E}\left[\frac{\partial^2 f_h}{\partial x_l \partial x_m} \left(\sum_{j \neq k} \Xi^0_{n,j}\right) \Xi_{n,k,l} \Xi_{n,k,m}\right]\\
& = \sum_{k=1}^n \mathbb{E}\left[\left\langle\Xi_{n,k},\textrm{\upshape{Hess}} f_h\left(\sum_{j \neq k} \Xi^0_{n,j}\right) \Xi_{n,k}\right\rangle\right].\\
\end{align*}
Thirdly,
\begin{align*}
& \sum_{k=1}^n \mathbb{E}\left[\left\langle \Xi_{n,k} , \textrm{\upshape{Hess}} f_h \left(\sum_{k=1}^n \Xi_{n,k}^0\right) \Xi_{n,k}\right\rangle\right]\\
& = \sum_{k=1}^n \sum_{l = 1}^N \sum_{m = 1}^N \mathbb{E}\left[\frac{\partial^2 f_h}{\partial x_l \partial x_m} \left(\sum_{k=1}^n \Xi_{n,k}^0\right) \Xi_{n,k,l} \Xi_{n,k,m}\right]\\
& \quad \quad (\left\{\Xi_{n,k}^0\right\} \textrm{ and } \left\{\Xi_{n,k}\right\} \textrm{ are independent})\\
& = \sum_{k=1}^n  \sum_{l = 1}^N \sum_{m = 1}^N \mathbb{E}\left[\frac{\partial^2 f_h}{\partial x_l \partial x_m} \left(\sum_{k=1}^n \Xi_{n,k}^0\right)\right] \textrm{\upshape{cov}}\left(\Xi_{n,k}\right)_{l,m}\\
& = \sum_{l=1}^N \sum_{m = 1}^N \mathbb{E}\left[\frac{\partial^2 f_h}{\partial x_l \partial x_m} \left(\sum_{k=1}^n \Xi_{n,k}^0\right)\right] \left(\sum_{k=1}^n \textrm{\upshape{cov}}\left(\Xi_{n,k}\right)_{l,m}\right)\\
& \quad \quad (\Xi_{n,1}, \ldots, \Xi_{n,n} \textrm{ are independent and } \mathbb{E}\left[\Xi_{n,k}\right] = 0)\\
& = \sum_{l=1}^N \sum_{m = 1}^N \mathbb{E}\left[\frac{\partial^2 f_h}{\partial x_l \partial x_m} \left(\sum_{k=1}^n \Xi_{n,k}^0\right)\right] \left(\textrm{\upshape{cov}}\left(\sum_{k=1}^n \Xi_{n,k}\right)_{l,m}\right)\\
& \quad \quad (\textrm{\upshape{cov}}\left(\sum_{k=1}^n \Xi_{n,k}\right) = I_{N \times N})\\
& = \mathbb{E}\left[\Delta f_h \left(\sum_{k=1}^n \Xi_{n,k}^0\right)\right]\\
& \quad \quad (\left\{\Xi_{n,k}^0\right\} \textrm{ is a copy of } \left\{\Xi_{n,k}\right\})\\
& = \mathbb{E}\left[\Delta f_h \left(\sum_{k=1}^n \Xi_{n,k}\right)\right].
\end{align*}
Finally, the Fundamental Theorem of Calculus reveals that
\begin{align}
&\nabla f_h \left(\sum_{k=1}^n \Xi_{n,k}\right) - \nabla f_h \left(\sum_{j \neq k} \Xi_{n,k}\right)\nonumber\\
& = \int_0^1 \textrm{\upshape{Hess}} f_h\left(\sum_{j \neq k} \Xi_{n,j} + r \Xi_{n,k}\right) \Xi_{n,k} dr.\label{aeq3}
\end{align}
Combining (\ref{aeq1}), (\ref{aeq2}) and (\ref{aeq3}) proves (\ref{eq:FirstLem}) and we are done.
\end{proof}

Proposition \ref{lem:FirstLemma} highlights the role of the Hessian matrix of $f_h$ in the search for an upper bound for expressions of the type $\left|\mathbb{E}\left[h\left(\Xi\right) - h\left(\Sigma_n\right)\right]\right|$. In the following proposition we establish an explicit integral representation for $\textrm{\upshape{Hess}} f_h $. We consider an $N$-vector $z \in \mathbb{C}^N$ as a $1 \times N$-matrix and we denote its transpose as $z^\tau$.

 \begin{pro}
 \begin{equation}
 \nabla f_h(x) = - \int_0^1 \frac{1}{2 \sqrt{s (1-s)}} \mathbb{E}\left[h\left(\sqrt{s} x + \sqrt{1 -s} \Xi\right) \Xi\right] ds\label{eq:GradSteinSol}
 \end{equation}
 and
 \begin{equation}
 \textrm{\upshape{Hess}}f_h(x) = - \int_0^1 \frac{1}{2(1-s)} \mathbb{E}\left[h\left(\sqrt{s}x + \sqrt{1-s} \Xi\right)\left(\Xi \Xi^\tau - I_{N \times N} \right)\right] ds.\label{eq:HessSteinSol}
 \end{equation}
 \end{pro}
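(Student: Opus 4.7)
The strategy is to derive both identities by differentiating the integral representation (\ref{eq:SteinSol}) of $f_h$ with respect to $x$ under the integral sign, and then applying Gaussian integration by parts (Stein's identity $\mathbb{E}[\partial_j g(\Xi)] = \mathbb{E}[\Xi_j g(\Xi)]$ for the standard $N$-dimensional Gaussian $\Xi$) to transfer each derivative off of $h$ onto a factor involving $\Xi$. All the weight factors in the integrands will fall out as natural byproducts of the combined chain rule and Stein computations.

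For the gradient, only the term $h(\sqrt{s} x + \sqrt{1 - s} \Xi)$ in (\ref{eq:SteinSol}) depends on $x$. The chain rule produces an extra $\sqrt{s}$ and yields an integral of $\mathbb{E}[\nabla h(\sqrt{s} x + \sqrt{1-s}\Xi)]$ weighted by $1/(2\sqrt{s})$. Applying Stein's identity to $g(\xi) = h(\sqrt{s} x + \sqrt{1-s}\xi)$, for which $\partial_j g(\xi) = \sqrt{1-s}(\partial_j h)(\sqrt{s} x + \sqrt{1-s}\xi)$, one obtains
\begin{equation*}
\mathbb{E}[(\partial_j h)(\sqrt{s} x + \sqrt{1-s}\Xi)] = \frac{1}{\sqrt{1-s}}\mathbb{E}[\Xi_j h(\sqrt{s} x + \sqrt{1-s}\Xi)],
\end{equation*}
so that the kernel becomes $1/(2\sqrt{s(1-s)})$, giving (\ref{eq:GradSteinSol}).

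For the Hessian, differentiate the coordinate expression $\partial_j f_h(x)$ once more with respect to $x_k$. The chain rule brings out another $\sqrt{s}$, and the integrand becomes $\mathbb{E}[\Xi_j(\partial_k h)(\sqrt{s} x + \sqrt{1-s}\Xi)]$. Apply Stein's identity in product form, $\mathbb{E}[\partial_k(\Xi_j G(\Xi))] = \mathbb{E}[\Xi_k \Xi_j G(\Xi)]$, with $G(\xi) = h(\sqrt{s} x + \sqrt{1-s}\xi)$. The product rule decomposes the left-hand side as $\delta_{jk}\mathbb{E}[G(\Xi)] + \sqrt{1-s}\,\mathbb{E}[\Xi_j(\partial_k h)(\sqrt{s} x + \sqrt{1-s}\Xi)]$, which upon rearranging yields
\begin{equation*}
\mathbb{E}[\Xi_j(\partial_k h)(\sqrt{s} x + \sqrt{1-s}\Xi)] = \frac{1}{\sqrt{1-s}}\mathbb{E}[(\Xi_j\Xi_k - \delta_{jk})h(\sqrt{s} x + \sqrt{1-s}\Xi)].
\end{equation*}
Combining the $\sqrt{s}$ from the chain rule, the $1/(2\sqrt{s(1-s)})$ kernel, and the new $1/\sqrt{1-s}$ from Stein collapses the weight to $1/(2(1-s))$, while the $(j,k)$-entries reassemble into the matrix $\Xi\Xi^\tau - I_{N\times N}$, producing (\ref{eq:HessSteinSol}).

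The main technical obstacle is justifying differentiation under the integral sign and the Gaussian IBP, with particular care at the endpoints of $[0,1]$. For the gradient, $1/\sqrt{s(1-s)}$ is integrable on $(0,1)$ and the integrand is uniformly bounded under the hypothesis that $h$ is bounded, so dominated convergence applies directly. For the Hessian, however, the weight $1/(1-s)$ is \emph{not} integrable at $s = 1$; the saving grace is that $\mathbb{E}[(\Xi\Xi^\tau - I_{N\times N}) h(\sqrt{s} x + \sqrt{1-s}\Xi)]$ vanishes at $s = 1$, since $\mathbb{E}[\Xi\Xi^\tau - I_{N\times N}] = 0$. Taylor-expanding $h$ around $\sqrt{s} x$ and invoking the boundedness of the first-order partial derivatives of $h$ shows this expectation is of order $O(\sqrt{1-s})$ as $s \uparrow 1$, so the integrand is $O(1/\sqrt{1-s})$, which is integrable. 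The standing hypothesis that $h$ has bounded first- and second-order partial derivatives is precisely what legitimates the two applications of Stein's identity and secures integrability at the boundary.
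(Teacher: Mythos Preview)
Your proposal is correct and follows essentially the same route as the paper: differentiate (\ref{eq:SteinSol}) under the integral sign and apply Gaussian integration by parts (Stein's identity) once for the gradient and a second time for the Hessian. In fact you supply more than the paper does, since the paper's proof is a two-line sketch that omits the endpoint integrability discussion you provide for the weight $1/(1-s)$ near $s=1$.
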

 
 \begin{proof}
Using (\ref{eq:SteinSol}) and performing an integration by parts on the Gaussian expectation gives 
 \begin{equation*}
 \frac{\partial f_h}{\partial x_l} (a) = \int_0^1 \frac{1}{2 \sqrt{s (1-s)}} \mathbb{E}\left[h\left(\sqrt{s} a + \sqrt{1 -s} \Xi\right) \Xi_l\right] ds 
 \end{equation*}
 and (\ref{eq:GradSteinSol}) follows. Using (\ref{eq:GradSteinSol}) and again performing an integration by parts on the Gaussian expectation gives 
 \begin{equation*}
 \frac{\partial^2 f_h}{\partial x_l \partial x_m} (a)=  \int_0^1 \frac{1}{2(1-s)} \mathbb{E}\left[h\left(\sqrt{s}a + \sqrt{1-s} \Xi\right)\left(\Xi_l \Xi_m - \delta_{lm}\right)\right] ds,
 \end{equation*}
 with $\delta_{lm}$ the Kronecker delta, and (\ref{eq:HessSteinSol}) follows.
 \end{proof}
 
The singularity at $1$ of the integrand in (\ref{eq:HessSteinSol}) makes it hard to control $\textrm{\upshape{Hess}}f_h(x)$ for general $h$. However, we establish in Proposition \ref{lem:GoodFormulaHessChar} that for the specific choice 
 \begin{equation*}
 h(x) = e_t(x) = \exp\left(- i \left\langle t,x\right\rangle\right), \quad t \in \mathbb{R}^N, 
\end{equation*} 
the integral representation of $\textrm{\upshape{Hess}}f_h(x)$ does not contain the factor $\frac{1}{1-s}$ anymore. We first need two lemmas.
 
 \begin{lem}
Fix $y, t \in \mathbb{R}^N$ and $s \in \left[0,1\right]$. Put
\begin{equation*}
\alpha_{y,t,s} = y + i \sqrt{1 - s} t.
\end{equation*}
Then 
\begin{equation}
- i \sqrt{1 - s} \left\langle t, y \right\rangle - \frac{1}{2} \left|y\right|^2 = - \frac{1}{2} (1-s) \left|t\right|^2 - \frac{1}{2} \alpha_{y,t,s}^\tau \alpha_{y,t,s}.\label{eq:FirstForm}
\end{equation}
Furthermore,
\begin{equation}
y y^\tau - I_{N \times N} = \alpha_{y,t,s} \alpha_{y,t,s}^\tau - i \sqrt{1 - s} t \alpha_{y,t,s}^\tau - i \sqrt{1-s} \alpha_{y,t,s} t^\tau - (1 - s) t t^\tau - I_{N \times N}.\label{eq:SecondForm}
\end{equation}
\end{lem}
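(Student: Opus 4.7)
Both identities (\ref{eq:FirstForm}) and (\ref{eq:SecondForm}) are purely algebraic manipulations of the definition $\alpha_{y,t,s} = y + i\sqrt{1-s}\,t$. The strategy is direct expansion, bearing in mind that $\tau$ denotes the \emph{unconjugated} transpose, so that for $z \in \mathbb{C}^N$ one has $z^\tau z = \sum_j z_j^2$ (not $\sum_j |z_j|^2$). In particular the cross-terms in $(y + i\sqrt{1-s}\,t)^\tau(y + i\sqrt{1-s}\,t)$ reinforce rather than cancel.

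For (\ref{eq:FirstForm}), my plan is to just expand $\alpha_{y,t,s}^\tau \alpha_{y,t,s}$. Since $y, t \in \mathbb{R}^N$, the identifications $y^\tau y = |y|^2$, $t^\tau t = |t|^2$, and $y^\tau t = t^\tau y = \langle t, y\rangle$ hold, and the expansion yields
\[
\alpha_{y,t,s}^\tau \alpha_{y,t,s} = |y|^2 + 2 i \sqrt{1 - s}\, \langle t, y\rangle - (1 - s)\,|t|^2.
\]
Substituting this into the right-hand side of (\ref{eq:FirstForm}) makes the two $|t|^2$-contributions cancel, leaving exactly the left-hand side.

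For (\ref{eq:SecondForm}), I would first expand $\alpha_{y,t,s} \alpha_{y,t,s}^\tau$ into the four $N \times N$ blocks $yy^\tau$, $i\sqrt{1-s}\,yt^\tau$, $i\sqrt{1-s}\,ty^\tau$, and $-(1-s)\,tt^\tau$, and then eliminate the remaining occurrences of $y$ from the right-hand side of (\ref{eq:SecondForm}) by applying $y = \alpha_{y,t,s} - i\sqrt{1-s}\,t$ to rewrite
\[
yt^\tau = \alpha_{y,t,s}\, t^\tau - i\sqrt{1-s}\, tt^\tau, \qquad ty^\tau = t\, \alpha_{y,t,s}^\tau - i\sqrt{1-s}\, tt^\tau.
\]
Careful collection of the $tt^\tau$ contributions then yields the claimed expression for $yy^\tau$, and subtracting $I_{N \times N}$ from both sides gives (\ref{eq:SecondForm}).

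The proof is bookkeeping and contains no genuine obstacle; the only place a slip could occur is in the sign accounting for the cross-terms, where the factor $(i\sqrt{1-s})^2 = -(1-s)$ must be tracked correctly through each of the several $tt^\tau$-contributions that arise.
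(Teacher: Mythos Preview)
Your proposal is correct and follows exactly the route the paper has in mind: the paper's own proof consists of the single sentence ``This is elementary,'' and your direct expansion of $\alpha_{y,t,s}^\tau\alpha_{y,t,s}$ and $\alpha_{y,t,s}\alpha_{y,t,s}^\tau$ (with the crucial observation that $\tau$ is the unconjugated transpose) is precisely the elementary computation being referred to.
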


\begin{proof}
This is elementary.
\end{proof}

\begin{lem}
\begin{align}
& \mathbb{E}\left[e_t\left(\sqrt{s} x + \sqrt{1 - s} \Xi\right) \left(\Xi \Xi^\tau - I_{N \times N}\right)\right]\nonumber\\
& = - (1 - s) tt^\tau \exp\left(- i \sqrt{s} \left \langle t, x \right\rangle- \frac{1}{2} (1 - s) \left|t\right|^2\right).\label{eq:FirstBigForm}
\end{align}
\end{lem}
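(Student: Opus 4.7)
The plan is to expand the expectation as an explicit Gaussian integral, use the algebraic identities of the previous lemma to rewrite the integrand in terms of $\alpha_{y,t,s}$, and then reduce the result to standard Gaussian moments by a contour shift in $\mathbb{C}^N$.

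First, I would use the fact that $\Xi$ has density $(2\pi)^{-N/2}\exp(-|y|^2/2)$ to write the left-hand side as an integral and pull out the factor that does not depend on the integration variable:
\begin{equation*}
\exp(-i\sqrt{s}\langle t,x\rangle) \int_{\mathbb{R}^N} (yy^\tau - I_{N \times N})\, \exp\bigl(-i\sqrt{1-s}\langle t,y\rangle - \tfrac{1}{2}|y|^2\bigr)\, \frac{dy}{(2\pi)^{N/2}}.
\end{equation*}
Applying (\ref{eq:FirstForm}) to the exponent and (\ref{eq:SecondForm}) to the matrix factor, with $\alpha = \alpha_{y,t,s} = y + i\sqrt{1-s}\,t$, this becomes
\begin{equation*}
e^{-i\sqrt{s}\langle t,x\rangle}\, e^{-\frac{1}{2}(1-s)|t|^2} \int_{\mathbb{R}^N} \bigl[\alpha\alpha^\tau - i\sqrt{1-s}\,t\alpha^\tau - i\sqrt{1-s}\,\alpha t^\tau - (1-s)tt^\tau - I_{N \times N}\bigr]\, e^{-\frac{1}{2}\alpha^\tau \alpha}\, \frac{dy}{(2\pi)^{N/2}}.
\end{equation*}

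Next, via a coordinate-wise contour shift $y \mapsto y - i\sqrt{1-s}\,t$, justified by Cauchy's theorem, the entireness of every matrix entry in $y \in \mathbb{C}^N$, and the uniform Gaussian decay of $|e^{-\frac{1}{2}\alpha^\tau\alpha}|$ on horizontal strips, I would evaluate the three relevant moments as
\begin{equation*}
\int_{\mathbb{R}^N} \alpha\alpha^\tau\, e^{-\frac{1}{2}\alpha^\tau\alpha}\, \frac{dy}{(2\pi)^{N/2}} = I_{N \times N},\quad \int_{\mathbb{R}^N} \alpha\, e^{-\frac{1}{2}\alpha^\tau\alpha}\, \frac{dy}{(2\pi)^{N/2}} = 0,\quad \int_{\mathbb{R}^N} e^{-\frac{1}{2}\alpha^\tau\alpha}\, \frac{dy}{(2\pi)^{N/2}} = 1.
\end{equation*}
Substituting these, the $\alpha\alpha^\tau$-contribution cancels the $-I_{N \times N}$-contribution, the two terms linear in $\alpha$ vanish, and only $-(1-s)tt^\tau$ survives. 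Recombining with the two prefactors yields exactly the stated right-hand side.

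The main obstacle is the rigorous justification of the contour shift in $N$ complex variables; this is routine given the Gaussian decay, but is the one step that is not purely algebraic. As a backup, in case one wants to sidestep complex analysis, the three moment identities can be verified by separation of coordinates and reduction to the one-dimensional Gaussian moments, each of which follows from completing the square in the exponent.
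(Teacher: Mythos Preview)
Your proposal is correct and follows essentially the same route as the paper: expand the expectation as a Gaussian integral, rewrite the exponent and the matrix factor via (\ref{eq:FirstForm}) and (\ref{eq:SecondForm}) in terms of $\alpha_{y,t,s}$, invoke Cauchy's Integral Theorem to shift the contour, and then read off the standard Gaussian moments. The only difference is cosmetic---the paper groups the five summands from (\ref{eq:SecondForm}) slightly differently before applying Cauchy, but the substance of the argument is identical.
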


\begin{proof}
Put
\allowdisplaybreaks\begin{equation*}
\alpha_{y,t,s} = y + i \sqrt{1 - s} t.
\end{equation*}
From (\ref{eq:FirstForm}) and (\ref{eq:SecondForm}) we learn that 
\begin{align*}
& \mathbb{E}\left[e_t\left(\sqrt{s} x + \sqrt{1 - s} \Xi\right) \left(\Xi \Xi^\tau - I_{N \times N}\right)\right]\\
& = \frac{1}{(2\pi)^{N/2}} \int_{\mathbb{R}^N} \exp\left(- i \sqrt{s} \left\langle t , x\right\rangle - i \sqrt{1 - s} \left\langle t, y \right\rangle - \frac{1}{2} \left|y\right|^2\right) \left(y y^\tau - I_{N \times N}\right) dy\\
& = \exp\left(- i \sqrt{s} \left\langle t , x \right\rangle - \frac{1}{2} (1 - s) \left|t\right|^2\right)\\
&\quad \bigg( \frac{1}{(2 \pi)^{N/2}} \int_{\mathbb{R}^N} \left(\alpha_{y,t,s} \alpha_{y,t,s}^\tau - I_{N \times N}\right)\exp\left(-\frac{1}{2} \alpha_{y,t,s}^\tau \alpha_{y,t,s}\right) dy\\
&\quad - i \sqrt{1 - s} \frac{1}{\left(2\pi\right)^{N/2}} \int_{\mathbb{R}^N} t\alpha_{y,t,s}^\tau \exp\left(-\frac{1}{2}\alpha_{y,t,s}^\tau \alpha_{y,t,s}\right) dy\\
&\quad - i \sqrt{1 - s} \frac{1}{\left(2\pi\right)^{N/2}} \int_{\mathbb{R}^N} \alpha_{y,t,s} t^\tau \exp\left(-\frac{1}{2}\alpha_{y,t,s}^\tau \alpha_{y,t,s}\right) dy\\
&\quad - \left((1-s) tt^\tau + I_{N \times N} \right)\frac{1}{\left(2\pi\right)^{N/2}} \int_{\mathbb{R}^N} \exp\left(- \frac{1}{2} \alpha_{y,t,s}^\tau \alpha_{y,t,s}\right) dy\bigg)
\end{align*}
which, by Cauchy's Integral Theorem,
\begin{align*}
& = \exp\left(- i \sqrt{s} \left\langle t , x \right\rangle - \frac{1}{2} (1 - s) \left|t\right|^2\right) \bigg(\frac{1}{\left(2\pi\right)^{N/2}} \int_{\mathbb{R}^N} (y y^\tau \exp\left(-\frac{1}{2} \left|y\right|^2\right) dy\\
&\quad - i \sqrt{1 - s} \frac{1}{\left(2\pi\right)^{N/2}} \int_{\mathbb{R}^N} t y^\tau \exp\left(-\frac{1}{2} \left|y\right|^2\right) dy\\
&\quad - i \sqrt{1 - s} \frac{1}{\left(2\pi\right)^{N/2}} \int_{\mathbb{R}^N} y t^\tau \exp\left(-\frac{1}{2}\left|y\right|^2\right) dy\\
&\quad - \left((1-s) tt^\tau + I_{N \times N}\right) \frac{1}{\left(2 \pi\right)^{N/2}} \int_{\mathbb{R}^N} \exp\left(- \frac{1}{2} \left|y\right|^2\right) dy\bigg)\\
& = \exp\left(- i \sqrt{s} \left\langle t , x \right\rangle - \frac{1}{2} (1 - s) \left|t\right|^2\right)\\
& \quad \left( \textrm{\upshape{cov}}(\Xi) - i \sqrt{1-s} t \mathbb{E}\left[\Xi\right]^\tau - i \sqrt{1 -s} \mathbb{E}\left[\Xi\right] t ^\tau - (1 - s) t t^\tau - I_{N \times N}\right)
\end{align*}
and (\ref{eq:FirstBigForm}) follows.
\end{proof}

\begin{pro}\label{lem:GoodFormulaHessChar}
\begin{equation}
\textrm{\upshape{Hess}}f_{e_t}(x)  = - \frac{1}{2} tt^\tau \int_0^1 \exp\left(-i \sqrt{s} \left\langle t,x\right\rangle - \frac{1}{2} (1 - s) \left|t\right|^2\right) ds\label{eq:FirstBigBigForm}.
\end{equation}
In particular,
\begin{align}
& D_{\textrm{\upshape{Hess}} f_{e_t}}(x,y) \nonumber\\
& = - \frac{1}{2} t t^\tau \int_0^1 e_{\sqrt{s} t}(y) \left[e_{\sqrt{s}t}(x - y) - 1\right] \exp\left(-\frac{1}{2} (1 - s) \left|t\right|^2\right) ds.\label{eq:FormD}
\end{align}
\end{pro}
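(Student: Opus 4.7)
The plan is to derive (\ref{eq:FirstBigBigForm}) as an immediate consequence of the integral representation (\ref{eq:HessSteinSol}), specialised to $h = e_t$, combined with the explicit Gaussian calculation carried out in (\ref{eq:FirstBigForm}). First I would substitute $h = e_t$ into (\ref{eq:HessSteinSol}), obtaining
\[
\textrm{\upshape{Hess}} f_{e_t}(x) = -\int_0^1 \frac{1}{2(1-s)}\, \mathbb{E}\left[e_t\left(\sqrt{s}\, x + \sqrt{1-s}\, \Xi\right)\left(\Xi\Xi^\tau - I_{N\times N}\right)\right]\,ds.
\]
By (\ref{eq:FirstBigForm}), the inner expectation equals $-(1-s)\, tt^\tau \exp\left(-i\sqrt{s}\langle t,x\rangle - \frac{1}{2}(1-s)|t|^2\right)$. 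The factor $(1-s)$ cancels the singular $\frac{1}{1-s}$ in the integrand, and after tracking the two minus signs one arrives at (\ref{eq:FirstBigBigForm}) directly.

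For (\ref{eq:FormD}), I would apply (\ref{eq:FirstBigBigForm}) at both $x$ and $y$ and subtract. Since the Gaussian factor $\exp\left(-\frac{1}{2}(1-s)|t|^2\right)$ depends on neither $x$ nor $y$, it pulls through the subtraction, so the difference equals $\frac{1}{2} tt^\tau$ times an integral whose integrand contains only $\exp(-i\sqrt{s}\langle t,x\rangle) - \exp(-i\sqrt{s}\langle t,y\rangle)$, that is, $e_{\sqrt{s}t}(x) - e_{\sqrt{s}t}(y)$. The decisive observation is the multiplicative property $e_{\sqrt{s}t}(x) = e_{\sqrt{s}t}(y)\, e_{\sqrt{s}t}(x-y)$, which follows at once from the bilinearity of $\langle\cdot,\cdot\rangle$ and the functional equation of the exponential. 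Using this identity to write
\[
e_{\sqrt{s}t}(x) - e_{\sqrt{s}t}(y) = e_{\sqrt{s}t}(y)\left[e_{\sqrt{s}t}(x-y) - 1\right]
\]
and substituting into the integral yields exactly the expression on the right-hand side of (\ref{eq:FormD}).

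I do not foresee any serious obstacle: the proposition essentially packages two routine manipulations. The genuine analytic content, namely the removal of the $\frac{1}{1-s}$ singularity in the integrand for the Hessian, has already been taken care of in (\ref{eq:FirstBigForm}), which conveniently produces the compensating factor $(1-s)$, precisely because $\Xi\Xi^\tau - I_{N\times N}$ is centred under the Gaussian measure. What remains is pure bookkeeping: keeping track of the signs, and exploiting the character-like multiplicativity of the test function $e_t$ to convert the difference of Hessians into the convenient factored form displayed in (\ref{eq:FormD}), which will be useful in the sequel because it isolates a single mean-zero-at-$y$ factor $[e_{\sqrt{s}t}(x-y)-1]$ amenable to elementary bounds.
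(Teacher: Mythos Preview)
Your proposal is correct and follows exactly the route the paper takes: the paper's proof merely states that combining (\ref{eq:HessSteinSol}) with (\ref{eq:FirstBigForm}) yields (\ref{eq:FirstBigBigForm}), and that (\ref{eq:FormD}) then follows immediately. You have simply spelled out in detail what ``combining'' and ``immediately'' mean here, including the cancellation of the $(1-s)$ factor and the multiplicativity of $e_{\sqrt{s}t}$.
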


\begin{proof}
Combining (\ref{eq:HessSteinSol}) and (\ref{eq:FirstBigForm}) gives (\ref{eq:FirstBigBigForm}). Also, (\ref{eq:FormD}) follows immediately from (\ref{eq:FirstBigBigForm}).
\end{proof}
 
Proposition \ref{lem:FirstLemma} and Proposition \ref{lem:GoodFormulaHessChar} lead to the following result, which contains an explicit formula for the quantity $\mathbb{E}\left[e_t\left(\Xi\right) - e_t\left(\Sigma_n\right)\right]$ without any reference to the Stein equation.

\begin{pro}\label{JanThm}
\begin{align}
& \mathbb{E}\left[e_t(\Xi) - e_t(\Sigma_n)\right]\label{FormJan}\\
& = - \frac{1}{2} \int_0^1 \bigg( \int_0^1 \sum_{k = 1}^n \mathbb{E}\left[e_{\sqrt{s} t} \left(\sum_{j \neq k} \Xi_{n,j}\right) \left[e_{\sqrt{s} t}\left(r \Xi_{n,k}\right) - 1\right] \left|\left\langle\Xi_{n,k},t\right\rangle\right|^2\right] dr\nonumber\\
&\quad -  \sum_{k = 1}^n \mathbb{E}\left[e_{\sqrt{s} t} \left(\sum_{j \neq k} \Xi^0_{n,j}\right) \left[e_{\sqrt{s} t}\left(\Xi^0_{n,k}\right) - 1\right] \left|\left\langle\Xi_{n,k},t\right\rangle\right|^2\right] \bigg) e^{- \frac{1}{2} (1 -s) \left|t\right|^2} ds.\nonumber
\end{align}
\end{pro}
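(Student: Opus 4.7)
The plan is to specialize Proposition \ref{lem:FirstLemma} to the test function $h = e_t$ and then inject the explicit formula for $D_{\mathrm{Hess}\, f_{e_t}}$ coming from Proposition \ref{lem:GoodFormulaHessChar}. The main simplification is algebraic: after substitution, the Hessian-difference operator turns the quadratic form $\langle \Xi_{n,k}, D_{\mathrm{Hess} f_{e_t}}(\cdot, \cdot) \Xi_{n,k}\rangle$ into $\tfrac{1}{2}\langle \Xi_{n,k}, t\rangle^2$ times a scalar exponential, and since $\Xi_{n,k}$ and $t$ are real this equals $\tfrac{1}{2}|\langle \Xi_{n,k}, t\rangle|^2$ times that exponential.

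I would first note that for any $v \in \mathbb{R}^N$, the rank-one matrix identity $\langle v, tt^\tau v\rangle = \langle t, v\rangle^2 = |\langle t,v\rangle|^2$ together with (\ref{eq:FormD}) yields
\begin{equation*}
\langle v, D_{\mathrm{Hess}\,f_{e_t}}(x,y) v\rangle = \tfrac{1}{2}|\langle t, v\rangle|^2 \int_0^1 e_{\sqrt{s}t}(y)\bigl[e_{\sqrt{s}t}(x-y) - 1\bigr] \exp\bigl(-\tfrac{1}{2}(1-s)|t|^2\bigr)\, ds.
\end{equation*}
This formula is the workhorse of the argument.

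Next, for the first sum in (\ref{eq:FirstLem}), I would take $x = \sum_{j \neq k} \Xi_{n,j} + r \Xi_{n,k}$ and $y = \sum_{j \neq k} \Xi_{n,j}$, so that $x - y = r \Xi_{n,k}$; this turns $e_{\sqrt{s}t}(x-y)$ into $e_{\sqrt{s}t}(r\Xi_{n,k})$ and $e_{\sqrt{s}t}(y)$ into $e_{\sqrt{s}t}\!\bigl(\sum_{j\neq k}\Xi_{n,j}\bigr)$. For the second sum, I take $x = \sum_{k=1}^n \Xi_{n,k}^0$ and $y = \sum_{j \neq k} \Xi_{n,j}^0$, which gives $x - y = \Xi_{n,k}^0$ and produces the terms $e_{\sqrt{s}t}(\Xi_{n,k}^0)$ and $e_{\sqrt{s}t}\!\bigl(\sum_{j \neq k}\Xi_{n,j}^0\bigr)$. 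With $v = \Xi_{n,k}$ in both cases, the quadratic forms yield $\tfrac{1}{2}|\langle \Xi_{n,k}, t\rangle|^2$ inside the expectations.

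Finally, I would apply Fubini to interchange the $ds$-integral (coming from (\ref{eq:FormD})) with the expectation and with the finite sum $\sum_{k=1}^n$ (as well as with the $dr$-integral in the first term); the integrability is immediate because all integrands are bounded by $|\langle \Xi_{n,k},t\rangle|^2$ in modulus and $\mathbb{E}[|\langle \Xi_{n,k},t\rangle|^2] < \infty$ since the $\Xi_{n,k}$ have finite second moments. Collecting the two pieces into a single $\int_0^1 \cdots\, e^{-\frac{1}{2}(1-s)|t|^2}\, ds$ then gives precisely (\ref{FormJan}). There is no serious obstacle here: Proposition \ref{lem:GoodFormulaHessChar} has already absorbed the only delicate point, namely removing the $\tfrac{1}{1-s}$ singularity from the generic Hessian representation (\ref{eq:HessSteinSol}); what remains is a bookkeeping calculation and an application of Fubini.
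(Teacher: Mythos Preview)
Your proposal is correct and follows essentially the same route as the paper's proof: apply Proposition~\ref{lem:FirstLemma} with $h=e_t$, insert the explicit formula (\ref{eq:FormD}) for $D_{\mathrm{Hess}\,f_{e_t}}$, use the identity $\langle v, tt^\tau v\rangle = |\langle v,t\rangle|^2$, and interchange summation, expectation and integration. The only addition you make is spelling out the Fubini justification, which the paper leaves implicit.
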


\begin{proof}
Applying (\ref{eq:FirstLem}) gives
\begin{align*}
& \mathbb{E}\left[e_t\left(\Xi\right) - e_t\left(\Sigma_n\right)\right]\\
& = \sum_{k=1}^n \int_0^1\mathbb{E}\left[\left\langle \Xi_{n,k},D_{\textrm{\upshape{Hess}} f_{e_t}}\left(\sum_{j \neq k} \Xi_{n,j} + r \Xi_{n,k},\sum_{j \neq k} \Xi_{n,j}\right) \Xi_{n,k}\right\rangle\right]dr\\
& \quad -  \sum_{k=1}^n\mathbb{E}\left[\left\langle\Xi_{n,k}, D_{\textrm{\upshape{Hess}} f_{e_t}}\left(\sum_{k = 1}^n \Xi_{n,k}^0,\sum_{j \neq k} \Xi_{n,j}^0\right)\Xi_{n,k}\right\rangle \right]
\end{align*}
which, using (\ref{eq:FormD}) and the elementary equality $\left\langle x,tt^\tau x\right\rangle = \left|\left\langle x,t\right\rangle\right|^2$,
\begin{align*}
& = \frac{1}{2} \int_0^1 \bigg( \int_0^1 \sum_{k = 1}^n \mathbb{E}\left[e_{\sqrt{s} t} \left(\sum_{j \neq k} \Xi_{n,j}\right) \left[e_{\sqrt{s} t}\left(r \Xi_{n,k}\right) - 1\right] \left|\left\langle\Xi_{n,k},t\right\rangle\right|^2\right] dr\\
&\quad -  \sum_{k = 1}^n \mathbb{E}\left[e_{\sqrt{s} t} \left(\sum_{j \neq k} \Xi^0_{n,j}\right) \left[e_{\sqrt{s} t}\left(\Xi^0_{n,k}\right) - 1\right] \left|\left\langle\Xi_{n,k},t\right\rangle\right|^2\right] \bigg) e^{- \frac{1}{2} (1 -s) \left|t\right|^2} ds
\end{align*}
proving the desired formula.
\end{proof}

Proposition \ref{JanThm} is crucial for the proof of Theorem \ref{MainThm}. We need one more lemma.

\begin{lem}\label{lem:LastLem}
Fix $t \in \mathbb{R}^N$, $r,s \in \left[0,1\right]$ and $\epsilon > 0$. Then
\begin{equation}
 \sum_{k = 1}^n\mathbb{E}\left[\left|e_{\sqrt{s} t} (r \Xi_{n,k}) - 1\right|\left|\Xi_{n,k}\right|^2\right]
 \leq \epsilon N + 2 \sum_{k = 1}^n \mathbb{E}\left[\left|\Xi_{n,k}\right|^2 ; \left|\left\langle\Xi_{n,k},t\right\rangle\right| > \epsilon\right] \label{eq:LastLemEq1}
\end{equation} 
and
\begin{equation}
 \sum_{k = 1}^n\mathbb{E}\left[\left|e_{\sqrt{s} t} (\Xi^0_{n,k}) - 1\right| \left|\Xi_{n,k}\right|^2\right]
\leq \epsilon N + 2 \sum_{k = 1}^n \mathbb{E}\left[\left|\Xi_{n,k}\right|^2 ; \left|\left\langle\Xi_{n,k}^0,t\right\rangle\right| > \epsilon\right].\label{eq:LastLemEq2}
\end{equation}
\end{lem}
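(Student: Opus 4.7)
The plan is to reduce both estimates to the elementary complex-exponential inequality
\[
\left|e^{i\theta}-1\right| \leq \min\bigl(2,|\theta|\bigr),\qquad \theta \in \mathbb{R},
\]
and then split each expectation according to whether the relevant inner product is small or large. For the first inequality, I write $e_{\sqrt{s}t}(r\Xi_{n,k})-1 = \exp(-i\sqrt{s}r\langle t,\Xi_{n,k}\rangle)-1$, so the elementary bound gives
\[
\left|e_{\sqrt{s}t}(r\Xi_{n,k})-1\right| \leq \min\bigl(2,\,\sqrt{s}r\,|\langle t,\Xi_{n,k}\rangle|\bigr) \leq \min\bigl(2,\,|\langle t,\Xi_{n,k}\rangle|\bigr),
\]
the last step using $\sqrt{s}r\leq 1$. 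This is the only place where the factor $r$ (or, in the second case, its absence) plays any role.

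Next I would decompose each summand into the events $\{|\langle \Xi_{n,k},t\rangle|\leq \epsilon\}$ and $\{|\langle \Xi_{n,k},t\rangle|>\epsilon\}$. On the first event the linear bound gives $|e_{\sqrt{s}t}(r\Xi_{n,k})-1|\leq \epsilon$, so that piece contributes at most $\epsilon\sum_k \mathbb{E}[|\Xi_{n,k}|^2;\,|\langle\Xi_{n,k},t\rangle|\leq\epsilon]\leq \epsilon\sum_k\mathbb{E}[|\Xi_{n,k}|^2]$, which equals $\epsilon N$ by Lemma \ref{SumNSTA}. On the complementary event the trivial bound $2$ yields the remaining $2\sum_k \mathbb{E}[|\Xi_{n,k}|^2;\,|\langle\Xi_{n,k},t\rangle|>\epsilon]$, establishing (\ref{eq:LastLemEq1}).

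The second inequality (\ref{eq:LastLemEq2}) is proved in exactly the same way, the only change being that the argument of $e_{\sqrt{s}t}$ is now $\Xi_{n,k}^0$ rather than $r\Xi_{n,k}$, so that the analogous bound reads $|e_{\sqrt{s}t}(\Xi_{n,k}^0)-1|\leq \min(2,|\langle t,\Xi_{n,k}^0\rangle|)$ (here using $\sqrt{s}\leq 1$). The decomposition is now made according to the events $\{|\langle \Xi_{n,k}^0,t\rangle|\leq \epsilon\}$ and $\{|\langle \Xi_{n,k}^0,t\rangle|>\epsilon\}$, and the small-event piece is again majorized by $\epsilon\sum_k\mathbb{E}[|\Xi_{n,k}|^2]=\epsilon N$ via Lemma \ref{SumNSTA}.

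There is no genuine obstacle here; the statement is a purely quantitative truncation estimate. The only subtlety worth flagging is keeping track of which variable ($\Xi_{n,k}$ or $\Xi_{n,k}^0$) controls the exponential and which controls the weight $|\Xi_{n,k}|^2$, so that the truncation is performed against the argument of $e_{\sqrt{s}t}$ in each case; this is exactly the form in which the lemma will be used in the proof of Theorem \ref{MainThm}.
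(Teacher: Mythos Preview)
Your proposal is correct and follows essentially the same approach as the paper: split each expectation according to whether $|\langle \Xi_{n,k},t\rangle|$ (respectively $|\langle \Xi_{n,k}^0,t\rangle|$) is at most $\epsilon$ or greater, use the bound $|e^{i\theta}-1|\leq|\theta|$ together with $\sqrt{s}r\leq 1$ on the small piece, the trivial bound $2$ on the large piece, and conclude via Lemma~\ref{SumNSTA}. The paper's writeup is slightly terser (it does not separately record the $\min(2,|\theta|)$ inequality), but the argument is the same.
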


\begin{proof}
The calculation
\begin{align*}
&\sum_{k=1}^n \mathbb{E}\left[\left|e_{\sqrt{s} t} \left(r \Xi_{n,k}\right) - 1\right|\left|\Xi_{n,k}\right|^2\right]\\
& = \sum_{k = 1}^n \mathbb{E}\left[\left|\exp\left(- i \sqrt{s} r \left\langle t ,\Xi_{n,k}\right\rangle\right) - 1\right|\left|\Xi_{n,k}\right|^2 ; \left|\left\langle\Xi_{n,k},t\right\rangle\right| \leq \epsilon\right]\\
& \quad + \sum_{k=1}^n \mathbb{E}\left[\left|\exp\left(- i \sqrt{s} r \left\langle t ,\Xi_{n,k}\right\rangle\right) - 1\right| \left|\Xi_{n,k}\right|^2 ; \left|\left\langle\Xi_{n,k},t\right\rangle\right| > \epsilon\right]\\
& \leq \epsilon \sum_{k = 1}^n \mathbb{E}\left[\left|\Xi_{n,k}\right|^2\right] + 2 \sum_{k=1}^n \mathbb{E}\left[\left|\Xi_{n,k}\right|^2 ; \left|\left\langle\Xi_{n,k},t\right\rangle\right| > \epsilon\right]\\
& \quad (\textrm{Lemma \ref{SumNSTA}})\\
& = \epsilon N + 2 \sum_{k = 1}^n \mathbb{E}\left[\left|\Xi_{n,k}\right|^2 ; \left|\left\langle\Xi_{n,k},t\right\rangle\right| > \epsilon\right]
\end{align*}
proves (\ref{eq:LastLemEq1}). The proof of (\ref{eq:LastLemEq2}) is similar.
\end{proof}

\begin{proof} [Proof of Theorem \ref{MainThm}]
Applying (\ref{FormJan}) gives
\begin{align*}
& \left|\phi_{\Xi}(t) - \phi_{\Sigma_n}(t)\right|\\
& = \left|\mathbb{E}\left[e_t\left(\Xi\right) - e_t\left(\Sigma_n\right)\right]\right|\\
& = \bigg| \frac{1}{2} \int_0^1 \bigg( \int_0^1 \sum_{k = 1}^n \mathbb{E}\left[e_{\sqrt{s} t} \left(\sum_{j \neq k} \Xi_{n,j}\right) \left[e_{\sqrt{s} t}\left(r \Xi_{n,k}\right) - 1\right] \left|\left\langle\Xi_{n,k},t\right\rangle\right|^2\right] dr\\
&\quad -  \sum_{k = 1}^n \mathbb{E}\left[e_{\sqrt{s} t} \left(\sum_{j \neq k} \Xi^0_{n,j}\right) \left[e_{\sqrt{s} t}\left(\Xi^0_{n,k}\right) - 1\right] \left|\left\langle\Xi_{n,k},t\right\rangle\right|^2\right] \bigg) e^{- \frac{1}{2} (1 -s) \left|t\right|^2} ds\bigg|
\end{align*}
and this is, by the Cauchy-Schwarz Inequality,
\begin{align*}
& \leq \frac{1}{2} \left|t\right|^2 \int_0^1 \bigg(\int_0^1\sum_{k=1}^n \mathbb{E}\left[\left|e_{\sqrt{s} t} \left(r \Xi_{n,k}\right) - 1\right|\left|\Xi_{n,k}\right|^2\right]dr\\
& \quad + \sum_{k = 1}^n\mathbb{E}\left[\left|e_{\sqrt{s} t} (\Xi^0_{n,k}) - 1\right| \left|\Xi_{n,k}\right|^2\right]\bigg) e^{- \frac{1}{2} (1 -s) \left|t\right|^2} ds\\
\end{align*}
which, by (\ref{eq:LastLemEq1}) and (\ref{eq:LastLemEq2}), for any $\epsilon > 0$,
\begin{align*}
& \leq 2 \epsilon N + 2 \bigg(\sum_{k = 1}^n \mathbb{E}\left[\left|\Xi_{n,k}\right|^2 ; \left|\left\langle\Xi_{n,k},t\right\rangle\right| > \epsilon\right] \\
& + \sum_{k = 1}^n \mathbb{E}\left[\left|\Xi_{n,k}\right|^2 ; \left|\left\langle\Xi^0_{n,k},t\right\rangle\right| > \epsilon\right]\bigg)\frac{1}{2} \left|t\right|^2 \int_0^1 e^{- \frac{1}{2} (1-s) \left|t\right|^2} ds\\
&\quad \quad (\textrm{perform the change of variables } u = \frac{1}{2} (1-s) \left|t\right|^2)\\\
& = 2 \epsilon N + 2 \bigg(\sum_{k = 1}^n \mathbb{E}\left[\left|\Xi_{n,k}\right|^2 ; \left|\left\langle\Xi_{n,k},t\right\rangle\right| > \epsilon\right] \\
& + \sum_{k = 1}^n \mathbb{E}\left[\left|\Xi_{n,k}\right|^2 ; \left|\left\langle\Xi^0_{n,k},t\right\rangle\right| > \epsilon\right]\bigg)\left(1 - \exp\left(-\frac{1}{2} \left|t\right|^2\right)\right).\\
\end{align*}
Since
\begin{align*}
& \sup_{\epsilon > 0} \sup_{t \in \mathbb{R}^N} \limsup_{n \rightarrow \infty} \sum_{k = 1}^n \mathbb{E}\left[\left|\Xi_{n,k}\right|^2 ; \left|\left\langle\Eta_{n,k},t\right\rangle\right| > \epsilon\right]\\
&= \sup_{t \in \mathbb{R}^N} \limsup_{n \rightarrow \infty} \sum_{k = 1}^n \mathbb{E}\left[\left|\Xi_{n,k}\right|^2 ; \left|\left\langle\Eta_{n,k},t\right\rangle\right| > 1\right]\\
&= L\left(\left\{\Xi_{n,k}\right\},\left\{\Eta_{n,k}\right\}\right),
\end{align*}
the previous calculation establishes (\ref{MainForm}), finishing the proof of Theorem \ref{MainThm}.
\end{proof}

\end{document}